\numberwithin{equation}{section}
\newcommand{\N}{\mathbb{N}}
\newcommand{\Q}{\mathbb{Q}}
\newcommand{\R}{\mathbb{R}}
\newcommand{\X}{{\rm X}}
\newcommand{\Y}{{\rm Y}}
\newcommand{\sfd}{{\sf d}}
\renewcommand{\d}{{\mathrm d}}
\newcommand{\nchi}{{\raise.3ex\hbox{$\chi$}}}
\newcommand{\restr}[1]{\lower3pt\hbox{$|_{#1}$}}
\newcommand{\lims}{\varlimsup}
\newcommand{\fr}{\penalty-20\null\hfill$\blacksquare$}
\newcommand{\mm}{\mathfrak m}
\newcommand{\ud}{\underline{\mathrm d}}
\newcommand{\Lip}{{\rm Lip}}
\newcommand{\lip}{{\rm lip}}
\DeclareMathOperator*{\esssup}{ess\,sup}
\newcommand{\F}{\mathcal F}
\newcommand{\LIP}{\operatorname{LIP}}
\newcommand{\Sob}{{\rm S}}
\newtheorem{theorem}{Theorem}[section]
\newtheorem{lemma}[theorem]{Lemma}
\newtheorem{proposition}[theorem]{Proposition}
\newtheorem{definition}[theorem]{Definition}
\newtheorem{remark}[theorem]{Remark}
\title{Differential of metric valued Sobolev maps}
\thanks{Thanks to MIUR SIR-grant `Nonsmooth Differential Geometry' (RBSI147UG4)}
\author{Nicola Gigli, Enrico Pasqualetto, Elefterios Soultanis}
\date{\today}
\address{SISSA, Via Bonomea 265, 34136 Trieste}
\email{ngigli@sissa.it}
\email{epasqual@sissa.it}
\email{elefterios.soultanis@gmail.com}
\keywords{Function spaces, Metric measure spaces, Sobolev spaces}%
\begin{document}

\begin{abstract}
We introduce a notion of differential of a Sobolev map between metric spaces. The differential is given in the framework of tangent and cotangent modules of metric measure spaces, developed by the first author. We prove that our notion is consistent with Kirchheim's metric differential when the source is a Euclidean space, and with the  abstract differential provided by the first author when the target is $\R$. 
\end{abstract}
\maketitle
\tableofcontents
\section{Introduction and main results}
\subsubsection*{Background and setting} The concept of real valued Sobolev functions defined on a metric measure space $(\X,\sfd_\X,\mm_\X)$ is by now well understood. Given an exponent $p\in[1,\infty)$ the space of functions $f:\X\to\R$ having `distributional differential in $L^p(\X)$ in a suitable sense' is denoted by $\Sob^p(\X)$. To each $f\in\Sob^p(\X)$ one associates the function $|D f|\in L^p(\X)$, called \emph{minimal weak upper gradient}, which in the smooth setting coincides with the modulus of the distributional differential (see \cite{Cheeger00} and \cite{Shanmugalingam00}, \cite{AmbrosioGigliSavare11}).

Inspired by the work of Weaver \cite{Weaver01}, in \cite{Gigli14} the first author built the theory of $L^p$-normed modules and gave a notion of \emph{differential} $\d f$ for maps $f\in\Sob^p(\X)$ in that framework: by definition, $\d f$ is an element of the so called \emph{cotangent $L^p$-normed module} $L^p(T^*\X)$ and has the property that its pointwise norm coincides $\mm_\X$-a.e.\ with $|Df|$. We remark that the linear structure of the space $\Sob^p(\X)$, a consequence of the fact that the target space $\R$ is a vector space, plays a key role in the construction.

We now turn to the case of metric-valued Sobolev maps. Let $(\X,\sfd_\X,\mm_\X)$ be a metric measure space as before and let $(\Y,\sfd_\Y)$ be a metric space which shall be assumed to be complete and separable. We shall also fix $p=2$ for simplicity. There are various possible definitions of the concept of Sobolev maps from $\X$ to $\Y$; here we shall work with the one based on post-composition (see \cite{HKST15} for historical remarks): we say that $f\in \Sob^2(\X;\Y)$ provided there is $G\in L^2(\X)$ such that for any $\varphi:\Y\to\R$ Lipschitz we have $\varphi\circ f\in \Sob^2(\X)$ with
\[
|D(\varphi\circ f)|\leq \Lip(\varphi)\,G\quad\mm_\X-a.e..
\]
The least such $G$ is then denoted $|D f|$ and called the \emph{minimal weak upper gradient} of the map $f$. Notice that since $\Y$ has no linear structure, the set $\Sob^2(\X;\Y)$ is not a vector space in general.

\bigskip

The question we address in this paper is the following: in analogy with the fact that `behind' the minimal weak upper gradient $|Df|$ of a real-valued Sobolev map there is an abstract differential $\d f$,  \emph{does there exist a notion of {differential} for a metric-valued Sobolev map?}

\bigskip

Before turning to the (positive) answer to this question, let us motivate our interest in the problem, which goes beyond the mere desire of generalization. In the celebrated paper \cite{ES64}, Eells and Sampson  proved Lipschitz regularity for harmonic maps between Riemannian manifolds when the target $N$ has non-positive curvature and is simply connected, and the Lipschitz estimate is given in terms of a lower Ricci curvature bound and an upper dimension bound on the source manifold $M$. A key point in their proof is the establishment of the now-called Bochner-Eells-Sampson formula for maps $f:M\to N$ which we shall write as
\begin{equation}
\label{eq:BES}
\Delta\frac{|\d f|^2}{2}\geq \nabla f(\Delta f)+K|\d f|^2,
\end{equation}
where $|\d f|$ is the Hilbert-Schmidt norm of the differential of $f$ and $K\in\R$ is a lower bound for the Ricci curvature of $M$ (let us remain vague about the meaning of $\nabla f(\Delta f)$). A direct consequence of \eqref{eq:BES} is that if $f$ is harmonic, then
\begin{equation}
\label{eq:BES2}
\Delta\frac{|\d f|^2}{2}\geq K|\d f|^2.
\end{equation}
This bound and Moser's iteration technique are sufficient to show that  $|\d f|$ is locally bounded from above in the domain of definition of $f$, thus showing the local Lipschitz regularity of $f$ (the upper dimension bound for $M$ enters into play in the constants appearing in Moser's argument).

Since the Lipschitz regularity of harmonic functions does not depend on the smoothness of $M$ and $N$ but only in the stated curvature bounds, it is natural to ask whether the same results hold assuming only the appropriate curvature bounds on the source and target space, without any reference to smoothness. Efforts in this direction have been made by Gromov-Schoen in \cite{GS92}, by Korevaar-Shoen in \cite{KS93} and by Zhang-Zhu in \cite{ZZ18}. The most general result is in \cite{ZZ18}, where the authors consider the case of source spaces which are finite-dimensional Alexandrov spaces with (sectional) curvature bounded from below and targets which are ${\sf CAT}(0)$ spaces. Still, given Eells-Sampson's result the natural synthetic setting appears to be that of maps from a ${\sf RCD}(K,N)$ space to a ${\sf CAT}(0)$ space;  as of today, this appears to be out of reach. Let us remark that in none of these 3 papers has inequality \eqref{eq:BES} been written down explicitly; in \cite{KS93} and \cite{ZZ18} ``only" a form of \eqref{eq:BES2} for harmonic maps has been established (in  \cite{GS92} the argument was different and based on Almgren's frequency function).

The present manuscript aims at being a first step in the direction of obtaining \eqref{eq:BES} for maps from ${\sf RCD}$ spaces to ${\sf CAT}(0)$ ones (see also \cite{GT18}): if succesful, this research project easily implies the desired Lipschitz regularity for harmonic maps and at the same time improves the understanding of the subject even in previously studied non-smooth settings.  The overall program is definitely ambitious, but we believe that even intermediate steps like the current manuscript have an intrinsic interest: see in particular the `review' of Kirchheim's notion of metric differential in Section \ref{se:kir}.

\bigskip The very first step to tackle in order to write down \eqref{eq:BES} is to understand what ``$\d f$" is. As stated, this is our goal in this manuscript. Let us informally describe the key concept in this work (the precise definitions will be given in Sections \ref{se:pre} and \ref{s:diff_Sob_map}).

\bigskip\noindent\subsubsection*{Differential of Sobolev maps} Given a Sobolev map $u\in {\rm S^2}(\X;\Y)$ between a metric measure space $(\X,\sfd_\X,\mm)$ and a complete separable metric space $(\Y,\sfd_\Y)$, we consider the metric measure space $(\Y,\sfd_\Y,\mu)$, where $\mu=:u_\ast(|Du|^2\mm)$. Then we define the differential $\d u$ of $u$ as an operator $$\d u :\,L^0(T\X)\to (u^\ast L^0_\mu(T^\ast\Y))^\ast$$ satisfying
\begin{equation}\label{differential}
\langle u^\ast \d f,\d u(V)\rangle =V(\d (f\circ u))\quad\mm\textrm{-a.e.}
\end{equation}
for every $f\in \Sob^2(\Y,\sfd_\Y,\mu)$ and $V\in L^0(T\X)$ (Definition \ref{def:differential_Sobolev_maps}).

The particular choice of measure $\mu$ is important: it ensures that for $f\in \Sob^2(\Y,\sfd_\Y,\mu)$ the pullback function $u^*f:=f\circ u$ belongs to $\Sob^2(\X,\sfd_\X,\mm)$ with 
\begin{equation}
\label{pbdiff}
|D(f\circ u)|\leq |Df|\circ u\,|D u|,
\end{equation}
see Proposition \ref{prop:compo} for the precise formulation. Once this is established,  the differential of $u$ can be defined by taking the appropriate adjoint of the map $\d f\mapsto \d(f\circ u)$, as in \eqref{differential}. Let us emphasise that on the right hand side of the crucial bound \eqref{pbdiff} there is the product of two `weak' objects: this makes the inequality non-trivial.

\bigskip Once the definition is given we verify that it is compatible, and thus generalizes, previously existing notions of differentials in the non-smooth setting. All our discussion is made for the Sobolev exponent $p=2$, but obvious modifications generalise all the results to the case $p\in(1,\infty)$.

\section{Preliminaries}\label{se:pre}
To keep the presentation short we assume the reader is familiar with the concept of Sobolev functions on a metric measure space (\cite{Cheeger00}, \cite{Shanmugalingam00}, \cite{AmbrosioGigliSavare11}, \cite{AmbrosioGigliSavare11-3}) and with that of $L^0$-normed modules and differentials of real valued Sobolev maps (\cite{Gigli14}, \cite{Gigli17}).

Here we only recall those concepts we shall use most frequently. Let us fix a complete, separable metric space $(\X,\sfd_\X)$ and a  non-negative and non-zero Radon measure $\mm$ giving finite mass to bounded sets. We shall denote by $\Lip(f)$ the (global) Lipschitz constant of a function, by $\LIP(\X),\LIP_{bs}(\X),  \LIP_{bd}(\X)$ the space of Lipschitz functions, Lipschitz functions with bounded support, and functions which are Lipschitz on bounded sets, respectively. We also denote by $\lip_a(f):\X\to[0,\infty]$ the asymptotic Lipschitz constant, defined by
\[
\lip_a(f)(x):=\lims_{y,z\to x}\frac{|f(y)-f(z)|}{\sfd_\X(y,z)}\quad\text{ if $x$ is not isolated, $0$ otherwise.}
\] 
Then we define:
\begin{definition}[The Sobolev class $\Sob^2(\X)$]\label{def:sob}
We say that  $f\in \Sob^2(\X)$ provided there is a function $G\in L^2(\mm)$ and a sequence $(f_n)\subset \LIP_{bd}(\X)$ converging to $f$ in $L^0(\mm)$  such that $(\lip_a(f_n))$ weakly converges to $G$ in $L^2(\mm)$.
\end{definition}
With respect to the approach in \cite{AmbrosioGigliSavare11}, \cite{AmbrosioGigliSavare11-3} here the difference is in the topology used in the relaxation procedure. The fact that our approach is equivalent to the one in \cite{AmbrosioGigliSavare11}, \cite{AmbrosioGigliSavare11-3} follows from the $L^0$-stability of weak upper gradients granted by the approach via test plans in conjunction with a cut-off argument.

For $f\in \Sob^2(\X)$ we recall that there is a minimal, in the $\mm$-a.e.\ sense, non-negative function $G\in L^2(\mm)$ for which the situation in Definition \ref{def:sob} occurs. Such $G$ is denoted $|D f|$ and called minimal weak upper gradient. It is then easy to check that:
\begin{equation}
\label{eq:optlip}
\text{$\forall f\in \Sob^2(\X)$ there is $(f_n)\subset \LIP_{bd}(\X)$ $\mm$-a.e.\ converging to $f$ such that $\lip_a(f_n)\to |D f|$ in $L^2(\mm)$.}
\end{equation}
From the minimal weak upper gradients one can `extract' a notion of differential:
\begin{theorem}[Cotangent module and differential]\label{thm:defdif} With the above notation and assumptions, there is a unique (up to unique isomorphism) couple $(L^0(T^*\X),\d)$ with $L^0(T^*\X)$ being a $L^0(\mm)$ normed module, $\d:\Sob^2(\X)\to L^0(T^*\X)$ linear and such that: $|\d f|=|Df|$ $\mm$-a.e.\ for every $f\in \Sob^2(\X)$ and $\{\d f:f\in\Sob^2(\X)\}$ generates $L^0(T^*\X)$.
\end{theorem}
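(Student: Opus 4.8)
The plan is to construct the pair $(L^0(T^*\X),\d)$ explicitly and then verify uniqueness by a universal-property argument. For the construction, I would start from the "pre-cotangent module'' consisting of formal finite sums $\sum_i \chi_{A_i}\,\d f_i$, where $f_i\in\Sob^2(\X)$ and $(A_i)$ is a Borel partition of $\X$, modulo the equivalence relation that identifies two such sums whenever they agree locally in the obvious sense (i.e.\ $\sum_i\chi_{A_i}|D(f_i-g_j)|=0$ $\mm$-a.e.\ on $A_i\cap B_j$ for the two partitions). On this quotient one defines the natural $\LIP_{bs}(\X)$-module (later $L^0(\mm)$-module) structure by multiplication on the characteristic functions, and a pointwise norm by $\big|\sum_i\chi_{A_i}\d f_i\big|:=\sum_i\chi_{A_i}|D f_i|$ $\mm$-a.e.; one checks this is well defined on equivalence classes and satisfies the pointwise triangle inequality and the $L^0$-homogeneity, using locality and subadditivity of the minimal weak upper gradient ($|D(f+g)|\le|Df|+|Dg|$ and $|Df|=|Dg|$ $\mm$-a.e.\ on $\{f=g\}$). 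Then $L^0(T^*\X)$ is defined as the completion of this normed module with respect to the $L^0$-distance $\int \min\{|\omega-\eta|,1\}\,\d\mm'$ (for a suitable finite measure $\mm'\sim\mm$), and $\d f$ is the equivalence class of $\chi_\X\,\d f$. By construction $|\d f|=|Df|$ $\mm$-a.e., $\d$ is linear (again by the corresponding properties of $|D\cdot|$, e.g.\ $|D(\lambda f)|=|\lambda||Df|$ and subadditivity yield $\d(\lambda f)=\lambda\d f$ and $\d(f+g)=\d f+\d g$), and the image of $\d$ generates $L^0(T^*\X)$ since the pre-cotangent elements do and we completed.

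For uniqueness, suppose $(\mathscr M,\d')$ is another such couple. I would define a map on the generating set by $\chi_\X\,\d f\mapsto \d' f$ and extend it $\LIP_{bs}$-linearly (equivalently, $L^0$-linearly) to the pre-cotangent module by $\sum_i\chi_{A_i}\d f_i\mapsto \sum_i\chi_{A_i}\d' f_i$. The key point is that this is well defined and norm-preserving: if two formal sums represent the same pre-cotangent element, then the corresponding combinations in $\mathscr M$ have the same pointwise norm, because the hypothesis $|\d' f|=|Df|$ $\mm$-a.e.\ together with the locality of the pointwise norm in any $L^0$-normed module (namely $\chi_A\omega=\chi_A\eta$ whenever $|\omega-\eta|=0$ on $A$, and $|\chi_A\omega|=\chi_A|\omega|$) forces agreement. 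Hence the map is a norm-preserving module morphism on a dense subset, so it extends uniquely to a module isomorphism $L^0(T^*\X)\to\mathscr M$ intertwining $\d$ and $\d'$ (surjectivity follows from the generation hypothesis on $\mathscr M$). Uniqueness of the isomorphism is immediate since it is prescribed on a generating set.

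The main obstacle is the well-definedness and, above all, the verification that the candidate pointwise norm genuinely makes the quotient a normed module — this is where one must be careful that the equivalence relation is exactly the kernel of the seminorm $\omega\mapsto\big|\sum_i\chi_{A_i}\d f_i\big|$, and that the resulting object is complete as an $L^0$-normed module (so that "generated by'' in the statement refers to the $L^0$-closure of the submodule generated by $\{\d f\}$). All of this rests on three purely Sobolev-theoretic facts about $|D\cdot|$ on $\Sob^2(\X)$: $L^0$-linearity-compatible subadditivity, locality on level sets, and the chain rule $|D(\varphi\circ f)|=|\varphi'(f)|\,|Df|$ for $\varphi\in C^1\cap\Lip$, which one uses to multiply differentials by bounded Borel functions via truncation and to pass from $\LIP_{bs}$-module to $L^0(\mm)$-module structure. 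These are standard (see \cite{AmbrosioGigliSavare11}, \cite{Gigli14}), so modulo them the proof is a routine "closure of a quotient of a free object'' argument; I would carry the construction out first and then dispatch uniqueness in a short paragraph as above.
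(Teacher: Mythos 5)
Your construction is correct and coincides with the standard one: the paper does not prove Theorem \ref{thm:defdif} but recalls it from \cite{Gigli14}, \cite{Gigli17}, where the cotangent module is built exactly as you propose — formal sums $\sum_i\chi_{A_i}\d f_i$ over Borel partitions, quotient by the kernel of the pointwise seminorm (using locality and subadditivity of $|D\cdot|$), completion in the $L^0$-distance, and uniqueness via the norm-preserving extension of $\d f\mapsto \d' f$ on the generating set. The only cosmetic remark is that linearity of $\d$ is built into the definition of addition on the pre-cotangent module rather than deduced from subadditivity, which instead serves to make the pointwise norm well defined; this does not affect the validity of your argument.
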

When we want to emphasise the role of the chosen measure, we shall write $(L^0_\mm(T^*\X),\d_\mm)$ in place of $(L^0(T^*\X),\d)$. Among the various properties of the differential, we shall frequently use its \emph{locality}:
\[
\d f=\d g\quad\mm-a.e.\ on\ \{f=g\},\qquad\forall f,g\in \Sob^2(\X).
\]

\bigskip
Let us now recall few facts about \emph{pullback} of modules:
\begin{theorem}[Pullback]\label{thm:pb}
Let $(\X,\sfd_\X,\mm_\X),(\Y,\sfd_\Y,\mm_\Y)$ be metric measure spaces as above, $u:\X\to\Y$ such that $u_*\mm_\X\ll\mm_\Y$ and $\mathscr M$ an $L^0(\mm_\Y)$-normed module. Then there is a unique (up to unique isomorphism) couple $(u^*\mathscr M,[u^*])$ such that $u^*\mathscr M$ is a $L^0(\mm_\X)$-normed module and $[u^*]:\mathscr M\to u^*\mathscr M$ is linear, continuous and such that $|[u^*v]|=|v|\circ u$ $\mm_\X$-a.e.\ for every $v\in\mathscr M$ and $\{[u^*v]:v\in\mathscr M\}$ generates $u^*\mathscr M$.
\end{theorem}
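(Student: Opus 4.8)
The plan is to carry out the standard ``generators and relations'' construction of $L^0$-normed modules. The one place where the assumption $u_*\mm_\X\ll\mm_\Y$ is genuinely used is in producing a well-defined and $L^0$-continuous pullback of \emph{functions} $L^0(\mm_\Y)\ni g\mapsto g\circ u\in L^0(\mm_\X)$: since the $u$-preimage of an $\mm_\Y$-negligible set is $\mm_\X$-negligible, $g\circ u$ is $\mm_\X$-a.e.\ well defined and $g\mapsto g\circ u$ is an algebra homomorphism; and if $g_n\to g$ in $L^0(\mm_\Y)$ then along any subsequence one extracts a further subsequence converging $\mm_\Y$-a.e., hence $g_n\circ u\to g\circ u$ $\mm_\X$-a.e.\ along it, and a routine subsequence argument upgrades this to convergence in $L^0(\mm_\X)$. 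All the quantities $|v|\circ u$ below, and the continuity of the map $[u^*]$, rest on this.

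For uniqueness, suppose $(\mathscr N_1,T_1)$ and $(\mathscr N_2,T_2)$ both satisfy the stated properties. On the set of finite simple combinations $\omega=\sum_{i=1}^n\chi_{A_i}T_1v_i$, with $(A_i)$ a Borel partition of $\X$ and $v_i\in\mathscr M$, define $\Phi(\omega):=\sum_{i=1}^n\chi_{A_i}T_2v_i$. Comparing two representations $\sum_i\chi_{A_i}T_1v_i=\sum_j\chi_{B_j}T_1w_j$ of the same $\omega$ on the common refinement $\{A_i\cap B_j\}$ and using $|T_1(v_i-w_j)|=|v_i-w_j|\circ u=|T_2(v_i-w_j)|$, one sees that $\Phi$ is well defined and that $|\Phi(\omega)|=\sum_i\chi_{A_i}|v_i|\circ u=|\omega|$ $\mm_\X$-a.e.; $\Phi$ is manifestly $L^0(\mm_\X)$-linear for simple scalars, hence for all of $L^0(\mm_\X)$ by continuity. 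Since $\{T_1v:v\in\mathscr M\}$ generates $\mathscr N_1$, the simple combinations are dense, so $\Phi$ extends to a norm-preserving module morphism $\mathscr N_1\to\mathscr N_2$; its image is closed and contains the generating set of $\mathscr N_2$, hence equals $\mathscr N_2$, so $\Phi$ is an isomorphism, and it is the unique one intertwining $T_1$ and $T_2$ because it is forced on a dense subset.

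For existence I would build the ``pre-pullback'': the set of finite formal sums $\{(A_i,v_i)\}_{i=1}^n$ with $(A_i)$ a Borel partition of $\X$ and $v_i\in\mathscr M$, equipped with the obvious addition (pass to a common refinement), multiplication by simple Borel functions, and the $L^0(\mm_\X)$-valued pointwise norm $\big|\{(A_i,v_i)\}\big|:=\sum_i\chi_{A_i}\,|v_i|\circ u$. Refining partitions together with the module axioms of $\mathscr M$ yields the triangle inequality and homogeneity with respect to simple scalars; one then extends scalar multiplication from simple functions to all of $L^0(\mm_\X)$ by approximation (invoking the continuity of the function pullback recalled above), quotients by the set where the pointwise norm vanishes, and completes with respect to the natural $L^0$-distance. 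This produces an $L^0(\mm_\X)$-normed module $u^*\mathscr M$; setting $[u^*]v:=\big[\{(\X,v)\}\big]$ gives a linear map with $\big|[u^*]v\big|=|v|\circ u$ $\mm_\X$-a.e., whence continuity follows from the continuity of $g\mapsto g\circ u$, and the simple elements $\sum_i\chi_{A_i}[u^*]v_i$ being dense by construction shows that $\{[u^*]v:v\in\mathscr M\}$ generates $u^*\mathscr M$.

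The routine parts are the manipulations with Borel partitions. The two points that deserve care are the function-pullback statement above (where the hypothesis $u_*\mm_\X\ll\mm_\Y$ enters) and the verification that, after passing to the quotient and completion, the object is a bona fide $L^0(\mm_\X)$-normed module --- in particular that scalar multiplication extends consistently to all of $L^0(\mm_\X)$ and that the glueing/locality property is preserved. I expect the latter to be the main technical obstacle, though it is entirely analogous to the construction of the cotangent module in Theorem \ref{thm:defdif}.
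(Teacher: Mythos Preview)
Your argument is correct and is essentially the standard construction. Note, however, that the paper does \emph{not} prove this theorem: it is stated in the Preliminaries as a known result, with the remark that ``these properties of pullbacks have been studied in \cite{Gigli14}, \cite{Gigli17} for maps satisfying $u_*\mm_\X\leq C\mm_\Y$, but if one is only interested in $L^0$-modules the theorems above are easily seen to hold with small modifications.'' Your proposal is precisely the generators-and-relations construction carried out in those references (for $L^p$-normed modules under bounded compression), transported to the $L^0$ setting; and you have correctly isolated the single point where the weaker hypothesis $u_*\mm_\X\ll\mm_\Y$ suffices, namely that the function pullback $L^0(\mm_\Y)\to L^0(\mm_\X)$ is well defined and continuous without any quantitative bound on $u_*\mm_\X$. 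So there is nothing to compare against in the paper itself, but your write-up is faithful to the cited construction and to the ``small modifications'' the authors allude to.
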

The module $u^*\mathscr M$ is called the pullback module and $[u^*]$ the pullback map. It can be directly checked by the uniqueness part of Theorem \ref{thm:pb} that 
\begin{equation}
\label{eq:pbfct}
\text{if }\quad \text{$u_*\mm_\X\ll\mm_\Y$}\quad\text{then}\quad  u^*L^0(\mm_\Y)\sim L^0(\mm_\X) \quad\text{ via the map }\quad [u^*f]\mapsto f\circ u.
\end{equation}

The pullback has the following universal property, which we shall frequently use: 
\begin{proposition}[Universal property of the pullback]\label{prop:univprop}
With the same notation and assumptions as in Theorem \ref{thm:pb} above, let $V\subset \mathscr M$ a generating subspace, $\mathscr N$ a $L^0(\mm_\X)$-normed module and $T:V\to \mathscr N$ a linear map such that $ |T(v)|\leq f|v|\circ u$ $\mm_\X$-a.e. $\forall v\in V$ for some $f \in L^0(\mm_\X)$. Then there exists a unique $L^0(\mm_\X)$-linear and continuous map $\tilde T:u^*\mathscr M\to\mathscr N$ such that $\tilde T([u^*v])=T(v)$ for every $v\in V$ and this map satisfies 
\begin{equation}
\label{eq:pb1}
 |\tilde T(w)|\leq f | w|\quad\mm_\X-a.e.\qquad\forall w\in u^*\mathscr M.
\end{equation}
In particular, if $T:\mathscr M_1\to\mathscr M_2$ is a $L^0(\mm_\Y)$-linear and continuous map satisfying $ |T(v)|\leq g|v|$ $\mm_\Y$-a.e. $\forall v\in\mathscr M_1$, for some $g \in L^0(\mm_\Y)$, applying the above to the map $\mathscr M_1\ni v\mapsto [u^*T(v)]\in u^*\mathscr M_2$ we deduce that there exists a unique $L^0(\mm_\X)$-linear and continuous map $u^*T:u^*\mathscr M_1\to u^*\mathscr M_2$ making the diagram
\[
\begin{tikzcd}
\mathscr M_1  \arrow[r,"T"] \arrow[d,"{[u^*]}"'] & \mathscr M_2  \arrow[d,"{[u^*]}"]\\
u^*\mathscr M_1  \arrow[r,"u^*T "'] & u^*\mathscr M_2 
\end{tikzcd}
\]
commute and such map satisfies
\begin{equation}
\label{eq:pb2}
|u^*T(w)|\leq g\circ u|w|\quad\mm_\X-a.e.\qquad\forall w\in u^*\mathscr M_1.
\end{equation}
\end{proposition}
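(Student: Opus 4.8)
The plan is to run the standard ``extension from a generating set'' argument, essentially the same boilerplate used to build the pullback map itself. First I would reduce to \emph{simple} pullback elements, i.e.\ finite sums $w=\sum_{i=1}^n\chi_{A_i}[u^*v_i]$ with $\{A_i\}_{i=1}^n$ a Borel partition of $\X$ and $v_i\in V$; since $V$ is a linear subspace, any finite combination $\sum_i g_i[u^*v_i]$ with $g_i\in L^0(\mm_\X)$ simple can be rewritten in this form (refine to the partition generated by the level sets involved, and use that $V$ is stable under sums on each piece). Because $V$ generates $\mathscr M$, the map $[u^*]$ is continuous, and $[u^*(\chi_Bv)]=\chi_{u^{-1}(B)}[u^*v]$ by $L^0(\mm_\Y)$-linearity of $[u^*]$, approximating a general $v\in\mathscr M$ by simple $V$-combinations shows that $[u^*v]$ lies in the closure of the set $\mathscr V$ of simple elements; as $\overline{\mathscr V}$ is a closed $L^0(\mm_\X)$-submodule (stability under multiplication by $\chi_B$, then by arbitrary $L^0(\mm_\X)$ functions via density of simple functions and continuity of the module product) containing all the $[u^*v]$, we get $\overline{\mathscr V}=u^*\mathscr M$.

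On $\mathscr V$ I would simply set $\tilde T\big(\sum_i\chi_{A_i}[u^*v_i]\big):=\sum_i\chi_{A_i}T(v_i)$. The only place where the hypothesis $|T(v)|\le f\,|v|\circ u$ is genuinely needed is well-definedness, and this is the main (indeed essentially the only) obstacle: given two partition representations of the same $w$, I pass to the common refinement; on a common piece $E$ the two representatives $v,v'\in V$ satisfy $\chi_E[u^*(v-v')]=0$, so $\chi_E\,|v-v'|\circ u=0$ by Theorem~\ref{thm:pb}, whence $\chi_E\,|T(v)-T(v')|\le f\,\chi_E\,|v-v'|\circ u=0$, i.e.\ $\chi_ET(v)=\chi_ET(v')$; summing over the refinement shows the two candidate values coincide. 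The same computation on a partition representation gives the bound on $\mathscr V$ at once: $|\tilde T(w)|=\sum_i\chi_{A_i}|T(v_i)|\le f\sum_i\chi_{A_i}|v_i|\circ u=f\,|w|$, using $|[u^*v_i]|=|v_i|\circ u$ again. Linearity over $\R$ and over simple multipliers on $\mathscr V$ is immediate.

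It remains to extend $\tilde T$ to $u^*\mathscr M$ and conclude. The bound makes $\tilde T$ uniformly continuous on $\mathscr V$ in the $L^0$-sense: if $w_k\to w$ in $u^*\mathscr M$ then $|\tilde T(w_k)-\tilde T(w_l)|\le f\,|w_k-w_l|\to0$ in $L^0(\mm_\X)$, so $(\tilde T(w_k))$ is Cauchy in the complete module $\mathscr N$; define $\tilde T(w):=\lim_k\tilde T(w_k)$, check independence of the approximating sequence by interleaving, and pass $L^0(\mm_\X)$-linearity (simple multipliers first, then arbitrary ones by continuity), continuity, and \eqref{eq:pb1} to the limit. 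Uniqueness is then clear: any $L^0(\mm_\X)$-linear continuous $S$ with $S([u^*v])=T(v)$ on $V$ agrees with $\tilde T$ on $\mathscr V$, hence on $u^*\mathscr M=\overline{\mathscr V}$. Finally, the ``in particular'' statement follows by applying the above with $\mathscr M=\mathscr M_1$, $V=\mathscr M_1$, $\mathscr N=u^*\mathscr M_2$, $f=g\circ u\in L^0(\mm_\X)$, and $T\colon v\mapsto[u^*T(v)]$, which satisfies $|[u^*T(v)]|=|T(v)|\circ u\le(g\,|v|)\circ u=g\circ u\,|v|\circ u$; the resulting operator $u^*T:=\tilde T$ makes the square commute by construction on the generators $[u^*v]$ and satisfies \eqref{eq:pb2}.
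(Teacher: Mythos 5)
Your argument is correct and is essentially the proof the paper has in mind: the paper does not prove Proposition \ref{prop:univprop} itself but defers to \cite{Gigli14}, \cite{Gigli17}, where exactly this scheme (define $\tilde T$ on simple elements $\sum_i\chi_{A_i}[u^*v_i]$, use the bound $|T(v)|\leq f\,|v|\circ u$ for well-posedness and the pointwise estimate, extend by density and $L^0$-continuity, get uniqueness by density) is carried out, with only the routine adaptation from the $L^p$ to the $L^0$ setting that you implement. There are no gaps; the auxiliary facts you assert along the way ($[u^*(\chi_B v)]=\chi_{u^{-1}(B)}[u^*v]$ and that multiplication by a fixed $f\in L^0(\mm_\X)$ preserves convergence in $L^0$) are standard and follow from the properties listed in Theorem \ref{thm:pb}.
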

These properties of pullbacks have been studied in \cite{Gigli14}, \cite{Gigli17} for maps satisfying $u_*\mm_\X\leq C\mm_\Y$, but if one is only interested in $L^0$-modules the theorems above are easily seen to hold with small modifications.

\bigskip

Finally, let us present a simple construction that we shall frequently use. Let $E\subset\X$ be Borel, put $\nu:=\mm\restr E$ and let $\mathscr M$ be a $L^0(\nu)$-normed module. To such a module we can canonically associate a $L^0(\mm)$-normed module, called \emph{extension} of $\mathscr M$ and denoted by ${\rm Ext}(\mathscr M)$, in the following way. First of all we notice that we have a natural projection/restriction operator ${\rm proj}:L^0(\mm)\to L^0(\nu)$ given by passage to the quotient up to equality $\nu$-a.e.\ and a natural `extension' operator ${\rm ext}:L^0(\nu)\to L^0(\mm)$ which sends $f\in L^0(\nu)$ to the function equal to $f$ $\mm$-a.e.\ on $E$ and to $0$ on $\X\setminus E$. Then for a generic $L^0(\nu)$-normed  module $\mathscr M$ we put ${\rm Ext}(\mathscr M):=\mathscr M$ as set,  multiplication of $v\in {\rm Ext}(\mathscr M) $ by $f\in L^0(\mm)$ is defined as ${\rm proj}(f)v\in \mathscr M={\rm Ext}(\mathscr M)$ and the pointwise norm as ${\rm ext}(|v|)\in L^0(\mm)$. We shall denote by ${\rm ext}:\mathscr M\to {\rm Ext}(\mathscr M)$ the identity map and notice that in a rather trivial way we have
\begin{equation}
\label{eq:extdual}
{\rm Ext}(\mathscr M^*)\sim {\rm Ext}(\mathscr M)^*\qquad\text{ via the coupling }\qquad{\rm ext}(L)\big({\rm ext}(v)\big):={\rm ext}(L(v)).
\end{equation}
In what follows we shall  always implicitly make this identification.

\section{Differential of metric-valued Sobolev maps}\label{s:diff_Sob_map}
Throughout this manuscript $(\X,\sfd_\X,\mm)$ will always denote a complete separable metric space endowed with a non-negative and non-zero Radon measure which is finite on bounded sets; $(\Y,\sfd_\Y)$ denotes a complete (not necessarily separable) metric space.

\begin{definition}[Metric valued Sobolev map]\label{def:sobmap}
The set $\Sob^2(\X,\Y)$ is the collection of all Borel maps $u:\X\to\Y$ which are essentially separably valued (i.e.\ there is a null set $N\subset\X$ so that $u(\X\setminus N)\subset\Y$ is separable)
%for some separable $\tilde\Y\subset\Y$ we have $\mm(u^{-1}(\Y\setminus\tilde\Y))=0$) 
for which there is $G\in L^2(\X,\mm)$, $G\geq 0$ such that for any $f\in\LIP(\Y)$ it holds $f\circ u\in \Sob^2(\X)$ and
\begin{equation}
\label{eq:defug}
|\d (f\circ u)|\leq \Lip(f)G\qquad\mm-a.e..
\end{equation}
The least, in the $\mm$-a.e.\ sense, function $G$ for which the above holds will be denoted $|D u|$.
\end{definition}
Notice that for $u\in\Sob^2(\X,\Y)$ the class of $G\in L^2(\X)$ for which \eqref{eq:defug} holds is a closed lattice, hence a $\mm$-a.e.\ minimal one exists and the definition of $|D u|$ is well posed.

Our study of functions in $\Sob^2(\X,\Y)$ begins  with the following basic lemma:
\begin{lemma}\label{le:first}
Let $u\in \Sob^2(\X,\Y)$ and $f\in\LIP(\Y)$. Then $f\circ u\in\Sob^2(\X)$ with
\begin{equation}
\label{eq:lipa}
|\d(f\circ u)|\leq \lip_a(f)\circ u\,|D u|\qquad\mm-a.e..
\end{equation}
\end{lemma}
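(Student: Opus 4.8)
The statement to prove is Lemma \ref{le:first}: for $u\in\Sob^2(\X,\Y)$ and $f\in\LIP(\Y)$, we have $f\circ u\in\Sob^2(\X)$ with the sharper bound $|\d(f\circ u)|\leq \lip_a(f)\circ u\,|Du|$ $\mm$-a.e., improving the global Lipschitz constant $\Lip(f)$ in Definition \ref{def:sobmap} to the pointwise asymptotic Lipschitz constant of $f$ evaluated along $u$. Membership in $\Sob^2(\X)$ is already granted by the definition, so the content is purely the pointwise refinement of the upper gradient bound.

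\medskip

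The plan is to localize on the target. Fix $y\in\Y$ and $r>0$, and consider the restriction $f$ to the closed ball $B:=\overline{B_r(y)}\subset\Y$. The key observation is that $f$ restricted to $B$ is Lipschitz with constant close to $\sup_{B}\lip_a(f)$ once $r$ is small — more precisely, for any $\eps>0$ there is a countable Borel partition $\{B_i\}$ of $\Y$ (say into small balls, or into sets of small diameter) such that $\Lip(f|_{B_i})\leq \essinf_{B_i}\lip_a(f)+\eps$, or at least $\Lip(f|_{B_i})\leq (\sup_{B_i}\lip_a(f))+\eps$. For each $i$ one can extend $f|_{B_i}$ to a global Lipschitz function $f_i:\Y\to\R$ with $\Lip(f_i)=\Lip(f|_{B_i})$ (McShane extension). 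Then $f_i\circ u\in\Sob^2(\X)$ by the defining property of $u$, and $|\d(f_i\circ u)|\leq \Lip(f_i)\,|Du|\leq (\sup_{B_i}\lip_a(f)+\eps)\,|Du|$ $\mm$-a.e. Now on the set $E_i:=u^{-1}(B_i)$ we have $f\circ u=f_i\circ u$, so by locality of the differential (recalled after Theorem \ref{thm:defdif}), $\d(f\circ u)=\d(f_i\circ u)$ $\mm$-a.e.\ on $E_i$, giving
\[
|\d(f\circ u)|\leq \Big(\sup_{B_i}\lip_a(f)+\eps\Big)\,|Du|\qquad\mm\text{-a.e.\ on }E_i.
\]
Since $\{E_i\}$ covers $\X$ up to a null set, and since on $E_i$ the quantity $\sup_{B_i}\lip_a(f)$ is close to $\lip_a(f)\circ u$ provided $\mathrm{diam}(B_i)$ is small and $\lip_a(f)$ is upper semicontinuous (which it is, being a limsup), letting $\eps\to 0$ and refining the partition yields the claim. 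One subtlety: we must first know $f\circ u\in\Sob^2(\X)$ globally (so that $\d(f\circ u)$ makes sense) — this is the definition — and then the $\mm$-a.e.\ piecewise bound upgrades automatically to the global pointwise bound because $|\d(f\circ u)|$ is a single fixed element of $L^0(\mm)$.

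\medskip

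The main obstacle I expect is making the approximation $\Lip(f|_{B_i})\approx \lip_a(f)$ on $B_i$ precise and measurable. The asymptotic Lipschitz constant $\lip_a(f)$ is defined as a limsup of difference quotients near a point; its relation to the genuine Lipschitz constant on small balls is: $\Lip(f|_{\overline{B_r(y)}})$ need not be close to $\sup_{\overline{B_r(y)}}\lip_a(f)$ for a general metric space, but it is if one uses instead the \emph{local} Lipschitz constant in a slightly enlarged ball, or if $\Y$ is geodesic. In a general metric target this requires care — one likely works with a countable cover by sets on which oscillation control holds, or passes through the pointwise Lipschitz constant $\lip(f)(x)=\limsup_{y\to x}|f(y)-f(x)|/\sfd_\Y(x,y)$ and uses that $\Lip(f|_{B_i})\to \sup_{x}\lip_a(f)(x)$ is false in general but $\leq \sup_{x\in B_i}\lip_a(f)(x)$ does hold when $B_i$ has small diameter and $f$ is Lipschitz, by a standard covering/chaining argument along a near-geodesic — or more robustly, one reduces to the case where the target is a subset of $\ell^\infty$ via a Kuratowski embedding, where balls are convex. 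The cleanest route is probably: embed $u(\X\setminus N)$ isometrically into $\ell^\infty$, extend $f$ to $\ell^\infty$ preserving the Lipschitz constant, and carry out the ball-decomposition there, where $\Lip(f|_{B_r(y)\cap\ell^\infty})\leq \sup_{B_r(y)}\lip_a(f)$ genuinely holds because segments are available. Once that geometric lemma is in hand, the rest is the locality argument sketched above.
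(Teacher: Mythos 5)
There is a genuine gap at exactly the point you flag yourself: the comparison between $\Lip(f\restr{B_i})$ and $\lip_a(f)$ on $B_i$. The partition version is simply false for a general target (take $\Y$ a two-point space, or any disconnected/non-geodesic set: $\lip_a(f)$ can vanish identically while $\Lip(f\restr{B_i})$ stays bounded away from $0$ for every set $B_i$ containing two points), and your proposed repair via a Kuratowski embedding does not prove the stated inequality. If you extend $f$ to a Lipschitz function $F$ on $\ell^\infty$ and run the chaining argument along segments inside small balls of $\ell^\infty$, the quantity you control is $\sup_{B_i}\lip_a(F)$ with the asymptotic Lipschitz constant of the \emph{extension} computed in $\ell^\infty$ (the segments leave $\Y$), and after refining you get at best $|\d(f\circ u)|\leq \lip_a(F)\circ u\,|Du|$. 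Since $\lip_a(F)\geq\lip_a(f)$ on $\Y$ and can be strictly larger (e.g.\ when $u$ takes values in a snowflake-type or totally disconnected subset, where the intrinsic $\lip_a(f)$ degenerates but no extension argument is provided to keep $\lip_a(F)$ equally small at points of $\Y$), this is a weaker statement than \eqref{eq:lipa}; producing an extension with $\lip_a(F)=\lip_a(f)$ along the essential image of $u$ is a nontrivial claim you neither prove nor reduce to something known.

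The fix is to abandon both the partition and the ambient embedding: you never need an upper bound of the form $\Lip(f\restr B)\leq\sup_B\lip_a(f)+\eps$, only the elementary pointwise identity $\lip_a(f)(y)=\inf\big\{\Lip(f\restr{B_r(y_n)})\,:\,y\in B_r(y_n)\big\}$, where $(y_n)$ is dense in a separable set carrying almost all of the image of $u$ (essential separable valuedness) and $r$ runs over positive rationals; this holds because $\Lip(f\restr{B_r(y)})$ decreases to $\lip_a(f)(y)$ as $r\downarrow0$ by the very definition of $\lip_a$. So take McShane extensions $f_{r,n}$ of $f\restr{B_r(y_n)}$ with $\Lip(f_{r,n})=\Lip(f\restr{B_r(y_n)})$, use \eqref{eq:defug} for $f_{r,n}$ together with locality of the differential (as recalled after Theorem \ref{thm:defdif}) to get $|\d(f\circ u)|\leq\Lip(f\restr{B_r(y_n)})\,|Du|$ $\mm$-a.e.\ on $u^{-1}(B_r(y_n))$, and then intersect this \emph{countable} family of $\mm$-a.e.\ statements: at $\mm$-a.e.\ $x$ you may take the infimum over all $(n,r)$ with $u(x)\in B_r(y_n)$, which is exactly $\lip_a(f)(u(x))$. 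This uses overlapping balls shrinking around each target point rather than a partition, and that is what makes the restricted Lipschitz constants converge to the intrinsic $\lip_a(f)$ with no geodesy or convexity assumption; the rest of your outline (membership in $\Sob^2(\X)$ from Definition \ref{def:sobmap}, locality, upgrading the piecewise bounds) is fine and is essentially how the proof is completed.
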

\begin{proof}
Replacing if necessary $\Y$ with a closed separable subset containing almost all the image of $u$ we can assume that $\Y$ is separable. Then let $(y_n)\subset \Y$ be countable and dense and for $r\in \Q$, $r>0$,  let $f_{r,n}\in\LIP(\Y)$ be a McShane extension of $f\restr{B_r(y_n)}$, i.e.\ a Lipschitz map defined on the whole $\Y$ which coincides with $f$ on $B_r(y_n)$ and such that $\Lip(f_{r,n})=\Lip(f\restr{B_r(y_n)})$. Then from \eqref{eq:defug} and the locality of the differential we see that
\[
|\d (f\circ u)|\leq \Lip(f\restr{B_r(y_n)})|D u|\qquad \mm-a.e.\ on\ u^{-1}(B_r(y_n)).
\]
Since for every $y\in\Y$ we have $\lip_a(f)(y)=\inf\Lip(f\restr{B_r(y_n)})$, where the $\inf$ is taken among all $n,r$ such that $y\in B_{r}(y_n)$, the conclusion follows.
\end{proof}
Let us fix  $u\in {\rm S}^2(\X,\Y)$ and equip the target space $\Y$ with the finite Radon measure
\[
\mu:=u_\ast(|D u|^2\mm).
\]
Notice that for $f\in L^0(\Y,\mu)$ the function $f\circ u$ is not well-defined up to equality $\mm$-a.e.\ in the sense that if $f=\tilde f$ $\mu$-a.e., then not necessarily $f\circ u=\tilde f\circ u$ $\mm$-a.e.. Still, we certainly have $f\circ u=\tilde f\circ u$ $\mm$-a.e.\ on $\{|D u|>0\}$ and for this reason we have  $f\circ u\,|D u|=\tilde f\circ u\,|D u|$ $\mm$-a.e., i.e.\ the map $f\mapsto f\circ u\,|D u|$ is well defined from $L^0(\Y,\mu)$ to $L^0(\X,\mm)$. Then the identity $\int  \big|f\circ u\,|D u|\big|^2\,\d\mm=\int |f|^2\,\d\mu$ shows that
\begin{equation}
\label{eq:cont}
L^2(\Y,\mu)\ni f\quad\mapsto\quad f\circ u\,|D u| \in L^2(\X,\mm)\qquad\text{ is linear and continuous}.
\end{equation}
We now turn to our key basic result about pullback of Sobolev functions:
\begin{proposition}\label{prop:compo}
Let $u\in \Sob^2(\X,\Y)$, put $\mu:=u_\ast(|Du|^2\mm)$ and let $f\in {\rm S}^2(\Y,\sfd_\Y,\mu)$. Then there is $g\in\Sob^2(\X)$ such that $g=f\circ u$ $\mm$-a.e.\ on $\{|Du|>0\}$ and 
\begin{equation}
\label{eq:chain}
|\d g|\leq |\d_\mu f|\circ u|D u|\qquad\mm-a.e..
\end{equation}
More precisely, there is  $g\in\Sob^2(\X)$ and a sequence $(f_n)\subset \LIP_{bd}(\Y)$ such that
\begin{equation}
\label{eq:conv}
\begin{array}{rllrll}
f_n& \to\ f\qquad& \mu-a.e.&\qquad\qquad\lip_a(f_n)& \to\  |\d_\mu f|\qquad &\text{\rm in }L^2(\mu),\\
f_n\circ u& \to\ g\qquad& \mm-a.e.&\qquad\qquad\lip_a(f_n)\circ u|Du|& \to\  |\d_\mu f|\circ u|D u|&\text{\rm in }L^2(\mm).
\end{array}
\end{equation}
\end{proposition}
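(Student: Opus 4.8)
The plan is to produce the approximating sequence $(f_n)$ explicitly and then let $n\to\infty$ using the closure of the Sobolev class. First I would apply \eqref{eq:optlip} to the function $f\in\Sob^2(\Y,\sfd_\Y,\mu)$, obtaining $(f_n)\subset\LIP_{bd}(\Y)$ with $f_n\to f$ $\mu$-a.e.\ and $\lip_a(f_n)\to|\d_\mu f|$ in $L^2(\mu)$; this is already the first line of \eqref{eq:conv}. Applying Lemma \ref{le:first} to McShane extensions of the restrictions of the $f_n$ to balls and using the locality of $\d$, one gets $f_n\circ u\in\Sob^2(\X)$ with $|\d(f_n\circ u)|\leq\lip_a(f_n)\circ u\,|D u|$ $\mm$-a.e.; and since $\lip_a(f_n)\to|\d_\mu f|$ in $L^2(\mu)$, the continuity statement \eqref{eq:cont} gives $\lip_a(f_n)\circ u\,|D u|\to|\d_\mu f|\circ u\,|D u|$ in $L^2(\mm)$, which is the last line of \eqref{eq:conv} and in particular shows $\sup_n\||\d(f_n\circ u)|\|_{L^2(\mm)}<\infty$.

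Next I would identify the $\mm$-a.e.\ limit $g$ of $(f_n\circ u)_n$. The elementary but crucial fact here is that if $A\subseteq\Y$ is $\mu$-null, then $|D u|=0$ $\mm$-a.e.\ on $u^{-1}(A)$, because $\mu(A)=\int_{u^{-1}(A)}|D u|^2\,\d\mm$; hence from $f_n\to f$ $\mu$-a.e.\ one deduces that $(f_n\circ u)_n$ converges $\mm$-a.e.\ on $\{|D u|>0\}$, with limit equal there to a Borel representative of $f\circ u$. The delicate issue is convergence on $\{|D u|=0\}$, which $\mu$ does not detect at all: to handle it I would refine the choice of $(f_n)$, altering the functions on a neighbourhood of arbitrarily small $\mu$-measure of a $\mu$-null Borel set carrying the part of $u_\ast(\tilde\mm\restr{\{|D u|=0\}})$ that is singular with respect to $\mu$ — here $\tilde\mm$ is an auxiliary finite measure with $\tilde\mm\ll\mm\ll\tilde\mm$ — so that $(f_n\circ u)_n$ becomes Cauchy in $L^0(\mm)$ without affecting the first line of \eqref{eq:conv}, the compatibility being arranged by a diagonal procedure. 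Letting $g$ be the resulting limit, we get the third line of \eqref{eq:conv} and $g=f\circ u$ $\mm$-a.e.\ on $\{|D u|>0\}$.

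Finally, to see $g\in\Sob^2(\X)$ with $|\d g|\leq|\d_\mu f|\circ u\,|D u|$ — that is, \eqref{eq:chain} — I would use the $L^0$-stability of minimal weak upper gradients recalled after Definition \ref{def:sob}: one has $f_n\circ u\in\Sob^2(\X)$, $f_n\circ u\to g$ in $L^0(\mm)$, and $|\d(f_n\circ u)|\leq G_n$ with $G_n:=\lip_a(f_n)\circ u\,|D u|\to G:=|\d_\mu f|\circ u\,|D u|$ in $L^2(\mm)$; passing to a subsequence along which $|\d(f_n\circ u)|\rightharpoonup H$ weakly in $L^2(\mm)$, one gets $H\leq G$ $\mm$-a.e., and the closure property yields $g\in\Sob^2(\X)$ with $|\d g|\leq H\leq G$.

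The main obstacle I anticipate is the genuine need for $\mm$-a.e.\ convergence of $(f_n\circ u)_n$ \emph{on all of} $\X$: since $\mu$ ignores $\{|D u|=0\}$, the sequence cannot be controlled there through $f$ alone (simple examples show that the bare output of \eqref{eq:optlip} may oscillate along the preimage of a $\mu$-null set), so the heart of the proof is the modification of $(f_n)$ just sketched, which must not disturb the $\mu$-side convergences. A secondary technical point is the passage from $L^0(\mm)$-convergence to membership in $\Sob^2(\X)$, for which the $f_n\circ u$ cannot be assumed uniformly locally bounded and one must run the cut-off argument referred to in Section \ref{se:pre}.
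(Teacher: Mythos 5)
Your setup — taking $(f_n)$ from \eqref{eq:optlip}, using Lemma \ref{le:first} and \eqref{eq:cont} for the gradient bounds, and concluding by the closure/stability of $\Sob^2(\X)$ — matches the paper, and you correctly isolate the crux: the $\mm$-a.e.\ convergence of $(f_n\circ u)$ on $\{|Du|=0\}$, which $\mu$ cannot detect. But your resolution of that crux is only a plan, and as sketched it would not go through. First, a $\mu$-null Borel set $N$ carrying the singular part of $u_*(\tilde\mm\restr{\{|Du|=0\}})$ need not admit Lipschitz cut-offs supported in neighbourhoods of small $\mu$-measure: $N$ may be dense, in which case every metric neighbourhood of $N$ has full measure, so "altering $f_n$ on a neighbourhood of arbitrarily small $\mu$-measure of $N$" is not available without first replacing $N$ by countably many compact carriers via inner regularity. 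Second, any such alteration is a gluing of the form $\phi h+(1-\phi)f_n$, whose asymptotic Lipschitz constant carries the cross term $\lip_a(\phi)\,|h-f_n|$; since $\Lip(\phi)$ blows up as the neighbourhood shrinks, keeping $\lip_a(\tilde f_n)\to|\d_\mu f|$ in $L^2(\mu)$ requires uniform bounds on $|h-f_n|$ there, i.e.\ a prior truncation of $f$ and of the $f_n$ — a reduction you never perform and whose absence you yourself flag at the end. Third, you do not say what the modified functions are set equal to near $N$, so the assertion that $(\tilde f_n\circ u)$ "becomes Cauchy in $L^0(\mm)$" on $u^{-1}(N)$ is unsupported. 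In short, the heart of the proof is left as a sketch with genuine obstructions.

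The paper obtains the missing $\mm$-a.e.\ convergence much more cheaply, without any surgery on the $f_n$: first reduce to $f\in L^\infty(\Y,\mu)$ by truncation and a diagonal argument, so that the $f_n$ may be taken equibounded; then on any bounded Borel set $B$ the sequence $(f_n\circ u)$ is bounded in $L^2(B,\mm\restr B)$, so after passing to convex combinations (Mazur's lemma) it converges strongly in $L^2(B,\mm\restr B)$, hence $\mm$-a.e.\ on $B$ along a subsequence; a diagonal argument over an exhaustion of $\X$ by bounded sets then produces $g$, and convex combinations do not disturb the convergences already established in \eqref{eq:conv}. The key observation you are missing is that the statement only asks for \emph{some} admissible sequence, so one is free to replace $(f_n)$ by convex combinations; this sidesteps entirely the attempt to force pointwise convergence of $f_n\circ u$ over the $\mu$-invisible set.
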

\begin{proof} Up to a truncation and diagonalization argument we can assume that $f\in L^\infty(\Y,\mu)$. Then let $(f_n)\subset \LIP_{bd}(\Y)$ be as in \eqref{eq:optlip} for $f$ and observe that since $f$ is bounded, by truncation we can assume the $f_n$'s to be uniformly bounded. Thus the first two convergences in \eqref{eq:conv} hold and, taking \eqref{eq:cont} into account we see that also the last in \eqref{eq:conv} holds. Now observe that if we can prove that $(f_n\circ u)$ has a limit $\mm$-a.e., call it $g$, then \eqref{eq:chain} would follow from Lemma \ref{le:first} above, \eqref{eq:cont} and the closure of the differential.

Let $B\subset\X$ be bounded and Borel. The functions $f_n\circ u$ are equibounded and $\mm(B)<\infty$, hence $(f_n\circ u)$ is bounded in $L^2(B,\mm\restr B)$. Thus by passing to an appropriate - not relabeled - sequence of convex combinations (which do not affect the already proven convergences in \eqref{eq:conv}) we obtain that $(f_n\circ u)$ has a strong limit in $L^2(B,\mm\restr B)$. Thus a subsequence converges $\mm$-a.e.\ on $B$ and by considering a sequence $(B_k)$ of bounded sets such that $\X=\cup_kB_k$, by a diagonalization argument we conclude the proof.
\end{proof}
Let us notice that since $\mu$ is a finite measure on $\Y$ we have  $\LIP(\Y)\subset\Sob^2(\Y,\sfd_\Y,\mu)$. Also,
\begin{equation}
\label{eq:glip}
\text{for $f\in\LIP(\Y)$ and $g\in \Sob^2(\X)$ as in Proposition \ref{prop:compo} we have }\d (f\circ u)=\d g.
\end{equation}
Indeed,  the locality of the differential gives $\d(f\circ u)=\d g$ on $\{|D u|>0\}$ and the bounds \eqref{eq:lipa} and \eqref{eq:chain} give $|\d (f\circ u)|=|\d g|=0$ $\mm$-a.e.\ on $\{|D u|=0\}$.

\bigskip

Observe that for $\nu:=\mm\restr{\{|Du|>0\}}$ we have $u_*\nu\ll\mu$, thus $u^\ast L^0_\mu(T^\ast\Y)$ is a well defined $L^0(\nu)$-normed module. Recalling the `extension' functor introduced at the end of Section \ref{se:pre},  our definition of $\d u$ is:
\begin{definition}\label{def:differential_Sobolev_maps}
	The \emph{differential} $\d u$ of $u\in {\rm S^2}(\X,\Y)$ is the operator \[ \d u:\, L^0(T\X)\to {\rm Ext}\big((u^\ast L^0_\mu(T^\ast\Y))^\ast \big)\] given as follows. 
	For $v\in L^0(T\X)$, the object $\d u(v)\in {\rm Ext}\big((u^\ast L^0_\mu(T^\ast\Y))^\ast\big)$ is characterized by the property: for every $f\in{\rm S}^2(\Y,\sfd_\Y,\mu)$ 
	and every $g\in \Sob^2 (\X,\sfd_\X,\mm)$ as in Proposition \ref{prop:compo} we have 
\begin{equation}\label{eq:def_du}
{\rm ext}\big([u^\ast \d_\mu f]\big)\big(\d u(v)\big)=\d g(v)\quad \mm-a.e..
\end{equation}
\end{definition}
We now verify that this is a good definition and check the very basic properties:
\begin{proposition}[Well posedness of the definition]\label{basic}
The differential $\d u(v)$ of $u$ in Definition \ref{def:differential_Sobolev_maps} is well-defined and the map  $\displaystyle \d u:\,L^0(T\X)\to {\rm Ext}\big((u^\ast L^0_\mu(T^\ast\Y))^\ast \big)$ is $L^0(\mm)$-linear and continuous. Moreover, it holds that
\begin{equation}
\label{eq:samenorm}
|\d u|=|Du|\qquad\mm-a.e..
\end{equation}
\begin{proof} Let $f\in \Sob^2(\Y,\sfd_\Y,\mu)$ and observe that if $g,g'\in\Sob^2(\X,\sfd_\X,\mm)$ both satisfy the properties listed in Proposition \ref{prop:compo} then the locality of the differential and the bound \eqref{eq:chain} show that $\d g=\d g'$. Hence the right hand side of \eqref{eq:def_du} depends only on $f,u,v$. Then notice that again the bound \eqref{eq:chain} gives
\[
\big|{\rm ext}\big([u^\ast \d_\mu f]\big)\big(\d u(v)\big)\big|\stackrel{\eqref{eq:def_du}}=|\d g(v)|\leq |\d g|\,|v|\stackrel{\eqref{eq:chain}}\leq |\d_\mu f|\circ u|D u|\,|v|=\big|{\rm ext}\big([u^\ast \d_\mu f]\big)\big||D u|\,|v|
\]
and thus the arbitrariness of $f\in \Sob^2(\Y,\sfd_\Y,\mu)$, Proposition \ref{prop:univprop} and property \eqref{eq:extdual} ensure that $\d u(v)$ is a well defined element of $\big({\rm Ext}(u^\ast L^0_\mu(T^\ast\Y)) \big)^\ast\sim {\rm Ext}\big((u^\ast L^0_\mu(T^\ast\Y))^\ast \big)$, as desired, with 
\begin{equation}
\label{eq:contv}
|\d u(v)|\leq |Du|\,|v|.
\end{equation}
The fact that $\d u(v)$ is $L^0(\mm)$-linear in $v$ is trivial and the bound \eqref{eq:contv} gives both continuity and the inequality $\leq$ in \eqref{eq:samenorm}. To get the other inequality let $f:\Y\to\R$ be 1-Lipschitz and  notice that since $\mu(\Y)<\infty$ we also have $f\in \Sob^2(\Y,\sfd_\Y,\mu)$. Since $u\in\Sob^2(\X,\Y)$ we have $f\circ u\in \Sob^2(\X)$ and can find  $v\in L^0(T\X)$  such that
\begin{equation}
\label{eq:gradf}
|v|=1\qquad\text{ and }\qquad \d (f\circ u)(v)=|\d (f\circ u)|\qquad \text{$\mm$-a.e.}
\end{equation}
(the existence of such $v$ follows by Banach-Alaoglu's theorem, see \cite[Corollary 1.2.16]{Gigli14}).  Moreover, let $g\in \Sob^2(\X)$ be as in Proposition \ref{prop:compo} and notice that
\[
\begin{split}
|\d(f\circ u)|&\stackrel{\eqref{eq:gradf}}=|\d (f\circ u)(v)|\stackrel{\eqref{eq:glip}}=|\d g(v)|\stackrel{\eqref{eq:def_du}}=\big|{\rm ext}\big([u^\ast \d_\mu f]\big)\big(\d u(v)\big)\big|\leq\big|{\rm ext}\big([u^\ast \d_\mu f]\big)\big|\,|\d u|\,|v|\\
&\stackrel{\eqref{eq:gradf}}=|\d_\mu f|\circ u\,|\d u|\leq |\d u|,
\end{split}
\]
having used the fact that $f$ is 1-Lipschitz in the last step. By the arbitrariness of $f$ and the very definition of $|Du|$ given in Definition \ref{def:sobmap}, this establishes $\geq$ in \eqref{eq:samenorm}. 
	\end{proof}
\end{proposition}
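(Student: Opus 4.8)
The plan is to verify the three assertions in Proposition \ref{basic} in order: well-definedness of $\d u(v)$, then $L^0(\mm)$-linearity and continuity, then the norm identity \eqref{eq:samenorm}.

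\medskip\noindent\textbf{Well-posedness.} Fix $v\in L^0(T\X)$. The candidate $\d u(v)$ is prescribed by \eqref{eq:def_du} only on elements of the form ${\rm ext}([u^*\d_\mu f])$ with $f\in \Sob^2(\Y,\sfd_\Y,\mu)$. The first thing to check is that the right-hand side $\d g(v)$ does not depend on the choice of $g$ attached to $f$ by Proposition \ref{prop:compo}: if $g,g'$ are two such, locality of $\d$ gives $\d g=\d g'$ on $\{|Du|>0\}$, while \eqref{eq:chain} forces $|\d g|=|\d g'|=0$ on $\{|Du|=0\}$, so $\d g=\d g'$, and hence $\d g(v)=\d g'(v)$. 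Next, the map $\d_\mu f\mapsto \d g(v)$ defined on the generating set $\{\d_\mu f\}$ of $L^0_\mu(T^*\Y)$ is $L^0(\mu)$-linear (by linearity of $f\mapsto g$ up to null sets and of $\d$), and the chain bound \eqref{eq:chain} together with $|\d g(v)|\le |\d g|\,|v|$ shows $|\d g(v)|\le |\d_\mu f|\circ u\,|Du|\,|v|$ $\mm$-a.e.; restricting to $\nu=\mm\restr{\{|Du|>0\}}$ (so $u_*\nu\ll\mu$) the universal property of the pullback (Proposition \ref{prop:univprop}) produces a unique $L^0(\nu)$-linear continuous functional on $u^*L^0_\mu(T^*\Y)$ extending it, with norm $\le |Du|\,|v|$. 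Using the identification \eqref{eq:extdual} this functional is precisely an element of ${\rm Ext}((u^*L^0_\mu(T^*\Y))^*)$, and it satisfies the pointwise bound \eqref{eq:contv}, namely $|\d u(v)|\le |Du|\,|v|$ $\mm$-a.e.

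\medskip\noindent\textbf{Linearity and continuity.} Both are now automatic. $L^0(\mm)$-linearity of $v\mapsto \d u(v)$ is inherited from linearity of $v\mapsto \d g(v)$ for each fixed $f$ and the uniqueness clause in the universal property; and \eqref{eq:contv} gives $|\d u(v)-\d u(v')|\le |Du|\,|v-v'|$, hence continuity from $L^0(T\X)$ to ${\rm Ext}((u^*L^0_\mu(T^*\Y))^*)$. The same bound already yields the inequality $|\d u|\le |Du|$ $\mm$-a.e.

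\medskip\noindent\textbf{The reverse inequality.} For $\geq$ in \eqref{eq:samenorm} the idea is to test against $1$-Lipschitz functions $f:\Y\to\R$, recalling that $\mu(\Y)<\infty$ gives $f\in\Sob^2(\Y,\sfd_\Y,\mu)$ and that $u\in\Sob^2(\X,\Y)$ gives $f\circ u\in\Sob^2(\X)$. Pick (via Banach--Alaoglu, as in \cite[Corollary 1.2.16]{Gigli14}) a $v\in L^0(T\X)$ with $|v|=1$ and $\d(f\circ u)(v)=|\d(f\circ u)|$ $\mm$-a.e. With $g$ as in Proposition \ref{prop:compo}, property \eqref{eq:glip} gives $\d(f\circ u)=\d g$, so using \eqref{eq:def_du} and the pointwise duality inequality one estimates
\[
|\d(f\circ u)| = |\d g(v)| = \big|{\rm ext}\big([u^*\d_\mu f]\big)\big(\d u(v)\big)\big| \le |\d_\mu f|\circ u\,|\d u|\,|v| \le |\d u|
\]
$\mm$-a.e., the last step because $f$ is $1$-Lipschitz (so $|\d_\mu f|\le 1$) and $|v|=1$. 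Since by Definition \ref{def:sobmap} $|Du|$ is the least function dominating all $|\d(f\circ u)|$ with $\Lip(f)\le 1$, taking the supremum over such $f$ yields $|Du|\le |\d u|$ $\mm$-a.e., completing the proof.

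\medskip The main obstacle is really the well-posedness step: one must juggle the fact that $f\circ u$ is only defined $\mm$-a.e.\ on $\{|Du|>0\}$, so the pullback of $L^0_\mu(T^*\Y)$ has to be taken with respect to $\nu=\mm\restr{\{|Du|>0\}}$ and then pushed back to all of $\X$ via the extension functor; checking that the universal property applies (which requires exactly the chain bound \eqref{eq:chain}, itself the non-trivial content of Proposition \ref{prop:compo}) and that \eqref{eq:extdual} correctly identifies the target is where the care is needed. Everything after that is a short chain of inequalities.
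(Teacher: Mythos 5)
Your proposal is correct and follows essentially the same route as the paper: well-posedness via locality of the differential and the chain bound \eqref{eq:chain} combined with the universal property of the pullback and the identification \eqref{eq:extdual}, then the bound \eqref{eq:contv} for linearity, continuity and the inequality $\leq$, and finally the reverse inequality by testing against $1$-Lipschitz functions with a vector field supplied by Banach--Alaoglu. The only cosmetic difference is that you spell out the $\R$-linearity and well-definedness of $\d_\mu f\mapsto \d g(v)$ on the generating set before invoking Proposition \ref{prop:univprop}, a step the paper leaves implicit.
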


\section{Consistency with previously known notions}
\subsection{The case $\Y=\R$}\label{se:r}
In this section we assume $\Y=\R$ and prove that once a few natural identifications are taken into account, the  newly defined differential $\d u:\,L^0(T\X)\to{\rm Ext}\big(u^*L^0_\mu(T^*\R)\big)^*$ is `the same' as the one as defined by Theorem \ref{thm:defdif}, which for the moment we shall denote as $\ud u\in L^0(T^*\X)$.

To start with, let us observe that directly from the definitions and the chain rule 
\begin{equation}
\label{eq:chainrule}
\d(f\circ u)=f'\circ u\,\ud u\quad\mm-a.e.\qquad\forall u\in\Sob^2(\X),\ f\in C^1\cap \LIP(\R) 
\end{equation}
(see \cite[Corollary 2.2.8]{Gigli17}), we have that the class $\Sob^2(\X,\Y)$ coincides with $\Sob^2(\X)$ when $\Y=\R$ and that the two notions of minimal weak upper gradients coincide.

To continue we recall a result, obtained in \cite{GP16-2}, about the structure of Sobolev functions on weighted $\R$. For a given Radon measure $\mu$ on $\R$ we shall denote by $L^0(\R,\R^*;\mu)$ (resp.\  $L^0(\R,\R;\mu)$) the $L^0(\mu)$-normed module of maps on $\R$ with values in $\R^*$ (resp.\ $\R$)  up to equality $\mu$-a.e.. We shall instead denote by $L^0_\mu(T^*\R)$ (resp.\ $L^0_\mu(T\R)$) the cotangent (resp.\ tangent) module associated to the space $(\R,\sfd_{\rm Eucl},\mu)$. Then we have:
\begin{theorem}\label{thm:GP}
Let $\mu$ be a Radon measure on $\R$. Then there is a unique $L^0(\mu)$-linear and continuous map $P:L^0(\R,\R^*;\mu)\to L^0_\mu(T^*\R)$ such that 
\begin{equation}
\label{eq:defP}
P(Df)=\d_\mu f\qquad\forall f\in  C^1\cap \LIP(\R),
\end{equation}
where $Df:\R\to\R^*$ is the differential of $f$. Its adjoint map $\iota:L^0_\mu(T\R)\to L^0(\R,\R;\mu)$ is an isometry. In particular, $L^0_\mu(T\R)$ is separable.
\end{theorem}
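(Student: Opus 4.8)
The plan is to construct the map $P$ on the dense subspace generated by differentials of $C^1\cap\LIP$ functions and extend by continuity, then identify its adjoint with a concrete pointwise evaluation and verify that evaluation is isometric.

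First I would define $P$ on elements of the form $\sum_i\nchi_{A_i}Df_i$ with $f_i\in C^1\cap\LIP(\R)$ and $A_i$ Borel, by setting $P(\sum_i\nchi_{A_i}Df_i):=\sum_i\nchi_{A_i}\d_\mu f_i$, and check this is well-defined and satisfies the norm bound $|P(\omega)|\le|\omega|$ $\mu$-a.e.. The key pointwise inequality here is $|\d_\mu f|\le|Df|=|f'|$ $\mu$-a.e., which follows from $\lip_a(f)=|f'|$ for $f\in C^1\cap\LIP(\R)$ together with $|\d_\mu f|\le\lip_a(f)$. Combined with locality of $\d_\mu$ this gives well-definedness of $P$ on simple combinations, and then density of such combinations in $L^0(\R,\R^*;\mu)$ — here one uses that piecewise-affine (hence $C^1$ after smoothing, or directly by an approximation argument) functions have differentials generating, or more simply that the single function $f(x)=x$ has $Df\equiv 1$ and $\nchi_A Df$ with $f$ linear already span a generating set — lets us extend $P$ uniquely to an $L^0(\mu)$-linear continuous map with $|P(\omega)|\le|\omega|$, satisfying \eqref{eq:defP} by construction. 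Uniqueness of $P$ is immediate since \eqref{eq:defP} pins it down on a generating set.

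Next I would compute the adjoint $\iota=P^*:L^0_\mu(T\R)\to(L^0(\R,\R^*;\mu))^*\sim L^0(\R,\R;\mu)$, using that the dual of the module of $\R^*$-valued maps is canonically the module of $\R$-valued maps via the pairing $(L,\omega)\mapsto\int\la L,\omega\ra$ fiberwise. By definition $\la\iota(V),\omega\ra=\la V,P(\omega)\ra$; testing against $\omega=Df$ for $f\in C^1\cap\LIP(\R)$ gives $\iota(V)\cdot f'=\la V,\d_\mu f\ra$. The nontrivial point is the isometry $|\iota(V)|=|V|$ $\mu$-a.e.. The inequality $|\iota(V)|\le|V|$ follows from $|P|\le 1$. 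For the reverse, I need to show that the differentials $\d_\mu f$ with $f\in C^1\cap\LIP(\R)$ fill out the cotangent module densely enough that their $|f'|$-normalized versions have pointwise norm bounded below — equivalently that $P$ is surjective onto a dense set and "norm-nonincreasing in a sharp way". Concretely, since $\{\d_\mu f:f\in\Sob^2(\R,\mu)\}$ generates $L^0_\mu(T^*\R)$ and every such $f$ is, by a localization/approximation argument on $\R$, an $L^0$-limit of $C^1\cap\LIP$ functions with asymptotic Lipschitz constants converging to $|\d_\mu f|$, one gets $\d_\mu f\in\overline{P(L^0(\R,\R^*;\mu))}$ and moreover the approximation can be arranged so that the approximating $Df_n$ have $|Df_n|=|f_n'|\to|\d_\mu f|$; this forces $P$ to be a quotient map onto its (dense, hence full) image with no norm loss, whence $P^*$ is an isometry.

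The main obstacle I expect is exactly this last surjectivity-with-sharp-norm step: proving that on weighted $\R$ every minimal weak upper gradient $|\d_\mu f|$ is the $\mu$-a.e. limit of $|f_n'|$ for a sequence $f_n\in C^1\cap\LIP(\R)$ approximating $f$, i.e. that $C^1$ functions are "optimal" approximants. On $\R$ this is believable because Sobolev functions on an interval are absolutely continuous on the support and one can mollify, but the measure $\mu$ may be very singular (atoms, Cantor-type sets) so one must be careful that mollification does not increase the weak upper gradient in an uncontrolled way on $\supp\mu$ — the right tool is the explicit description of $\Sob^2(\R,\mu)$ from \cite{GP16-2}, which presumably says $f$ is locally $W^{1,2}$ with respect to Lebesgue measure on $\supp\mu$ away from atoms and characterizes $|\d_\mu f|$ as $|f'|$ there, making the approximation routine. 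Once that structural input is granted, the separability of $L^0_\mu(T\R)$ follows for free: $L^0(\R,\R;\mu)$ is separable (being the scalar $L^0$ space over a separable measure space, as $\R$ is second countable), and an isometric submodule of a separable $L^0$-module is separable.
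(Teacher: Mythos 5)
A preliminary remark on the comparison: the paper does not prove Theorem \ref{thm:GP} at all --- it is recalled verbatim from \cite{GP16-2} --- so your proposal can only be measured against that external result, not against an internal argument. With that said, the first half of your proposal is correct and is indeed the routine part: since $\lip_a(f)=|f'|$ everywhere for $f\in C^1\cap\LIP(\R)$, one gets $|\d_\mu f|\le |Df|$ $\mu$-a.e., which yields well-posedness and contractivity of $P$ on $L^0$-combinations $\sum_i\nchi_{A_i}Df_i$; and since the identity map belongs to $C^1\cap\LIP(\R)$ and $D({\rm id})$ generates $L^0(\R,\R^*;\mu)$, such combinations are dense, so the extension exists and \eqref{eq:defP} pins $P$ down uniquely. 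The identification of the adjoint's target with $L^0(\R,\R;\mu)$ and the deduction of separability from the isometry are also fine.

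The genuine gap sits exactly where you locate ``the main obstacle'': the isometry of $\iota$, i.e.\ the claim that for every $f\in\Sob^2(\R,\sfd_{\rm Eucl},\mu)$ the differential $\d_\mu f$ is reachable by $P(Df_n)$ with $|f_n'|\to|\d_\mu f|$ in $L^2(\mu)$. This is the entire content of the theorem, and at that point you stop proving and appeal to a ``presumed'' structural description of $\Sob^2(\R,\sfd_{\rm Eucl},\mu)$ taken from \cite{GP16-2}. In the present context this is circular: Theorem \ref{thm:GP} \emph{is} the result being imported from \cite{GP16-2}, so its crux cannot be delegated back to that same reference. Moreover, the description you guess (``$f$ is locally $W^{1,2}$ with respect to Lebesgue on $\supp\mu$ away from atoms, with $|\d_\mu f|=|f'|$ there'') is false in general: when $\mu$ has a nontrivial singular part, or when the density of its absolutely continuous part degenerates, the minimal weak upper gradient is typically strictly smaller than any pointwise difference quotient along $\supp\mu$ and can vanish identically (think of $\mu$ concentrated on a Lebesgue-null Cantor set); this degeneracy is precisely what makes the isometry statement delicate rather than routine. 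Note also that your worry about mollification is aimed at the wrong object: the natural self-contained route is not to regularize $f$ but to mollify the optimal Lipschitz approximants of \eqref{eq:optlip}, exploiting the $\mu$-independent pointwise bound $\lims_{\eps\to0}|(g\ast\rho_\eps)'|\le\lip_a(g)$, together with the closure of the differential, cut-offs (to pass from $\LIP_{bd}$ to $\LIP$ and to handle $\mu(\R)=\infty$), and a weak-compactness/convex-combination argument to pass the $\mm$-free inequality $\la V,\d_\mu h_n\ra\le|\iota(V)|\,|h_n'|$ to the limit. Carrying out that scheme is where the real work of the theorem lies, and none of it is present in the proposal.
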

Let now $u\in \Sob^2(\X)$, put $\mu:=u_*(|\d u|^2\mm)$ and consider the $L^0(\mm)$-normed module ${\rm Ext}(u^*L^0_\mu(T^*\R))$. From the separability of $L^0_\mu(T\R)$ provided by Theorem \ref{thm:GP} above, the characterisation of the dual of the pullback obtained in  \cite[Theorem 1.6.7]{Gigli17} and \eqref{eq:extdual} we see that
\[
{\rm Ext}(u^*L^0_\mu(T\R))\sim {\rm Ext}(u^*L^0_\mu(T^*\R))^* \quad\text{via the coupling}\quad {\rm ext}([u^*L])\big({\rm ext}([u^*v])\big):={\rm ext}(L(v)\circ u).
\]
Hence in the present situation we shall think of $\d u$ as a map from $L^0(T\X)$ to ${\rm Ext}(u^*L^0_\mu(T\R))$.

Now put $\nu:=\nchi_{\{|Du|>0\}}\mm$ as before and consider the $L^0(\nu)$-linear and continuous operators
\[\begin{split}
u^* P:\,u^*L^0(\R,\R^*;\mu)\longrightarrow u^*L^0_\mu(T^*\R),\qquad\qquad u^*\iota:\,u^*L^0_\mu(T\R)\longrightarrow u^*L^0(\mu)\stackrel{\eqref{eq:pbfct}}\sim L^0(\nu)
\end{split}\]
defined via  the universal property of the pullback module given in Proposition \ref{prop:univprop}. It is then clear that $u^*\iota$ is the adjoint of $u^*P$, thus  from \eqref{eq:defP} we see that 
\begin{equation}\label{eq:charact_tilde_iota}
[u^*Df]\big(u^*\iota(V)\big)=[u^*\d_\mu f](V)\quad\nu-a.e.\qquad\text{ for every }V\in u^*L^0_\mu(T\R),\ f\in C^1_c(\R).
\end{equation}
Finally, noticing that ${\rm ext}:u^*L^0_\mu(T\R)\to {\rm Ext}(u^*L^0_\mu(T\R))$ is invertible,  we define $\mathcal I:{\rm Ext}(u^*L^0_\mu(T\R))\to L^0(\mm)$ as
\begin{equation}
\label{eq:defI}
\mathcal I:={\rm ext}\circ u^*\iota\circ{\rm ext}^{-1}.
\end{equation}
Then we have:
\begin{theorem} With the above notation and assumptions we have $|\d u|=|\ud u|$ $\mm$-a.e.\ and 
\begin{equation}
\label{eq:samediff}
\mathcal I(\d u(v))=\ud u(v)\quad\mm-a.e.\qquad\forall v\in L^0(T\X).
\end{equation}
\end{theorem}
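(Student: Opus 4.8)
The plan is to verify the two assertions separately, with the second one being essentially a matter of unwinding all the identifications while using the defining property \eqref{eq:def_du} of $\d u$ against the chain rule \eqref{eq:chainrule}. First, for the norm identity: by Proposition \ref{basic} we already know $|\d u|=|Du|$ $\mm$-a.e., and since $\Sob^2(\X,\Y)$ coincides with $\Sob^2(\X)$ when $\Y=\R$ with matching minimal weak upper gradients (as observed via \eqref{eq:chainrule}), we have $|Du|=|\ud u|$ $\mm$-a.e., giving $|\d u|=|\ud u|$ $\mm$-a.e.\ immediately.

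For \eqref{eq:samediff} the strategy is as follows. Both sides are $L^0(\mm)$-linear and continuous in $v$, so it suffices to test the identity against a class of elements of $u^*L^0_\mu(T^*\R)$ — equivalently, to pair with a generating set. Since $\{[u^*Df]:f\in C^1_c(\R)\}$ generates $u^*L^0(\R,\R^*;\mu)$ and hence, after applying $u^*P$, the elements $\{[u^*\d_\mu f]:f\in C^1_c(\R)\}$ generate $u^*L^0_\mu(T^*\R)$ (using \eqref{eq:defP}), it is enough to check that for every such $f$ one has
\[
{\rm ext}\big([u^*\d_\mu f]\big)\big(\d u(v)\big)={\rm ext}\big([u^*\d_\mu f]\big)\big({\rm ext}^{-1}\text{-transport of }\ud u(v)\big)\quad\mm-a.e.
\]
The left-hand side equals $\d g(v)$ by \eqref{eq:def_du}, where $g\in\Sob^2(\X)$ is the function from Proposition \ref{prop:compo}; by \eqref{eq:glip} (or directly, since here $f$ is Lipschitz on bounded sets and the $\mu$ is finite) we get $\d g=\d(f\circ u)$, which by the chain rule \eqref{eq:chainrule} equals $f'\circ u\,\ud u$, so the left-hand side is $f'\circ u\,\ud u(v)$. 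On the right-hand side I would unfold the coupling on ${\rm Ext}(u^*L^0_\mu(T\R))$ described just before \eqref{eq:charact_tilde_iota}: writing $\ud u(v)={\rm ext}(\mathcal I(\d u(v)))$ would be circular, so instead I express $\ud u(v)$ as a genuine function in $L^0(\mm)$ and use \eqref{eq:charact_tilde_iota} together with $\iota(\d_\mu f)=f'$ $\mu$-a.e.\ (the content of Theorem \ref{thm:GP}, since $\iota$ is the adjoint of $P$ and $P(Df)=\d_\mu f$ while $Df=f'\,\d t$). Concretely, the claim \eqref{eq:samediff} with $\mathcal I={\rm ext}\circ u^*\iota\circ{\rm ext}^{-1}$ unravels to: pairing $[u^*\d_\mu f]$ against $({\rm ext})^{-1}(\d u(v))\in u^*L^0_\mu(T\R)$ gives, via \eqref{eq:charact_tilde_iota}, the quantity $[u^*Df]\big(u^*\iota\,(\text{that element})\big)$, i.e.\ $f'\circ u$ times $u^*\iota$ applied to it — and matching this with $f'\circ u\,\ud u(v)$ forces $u^*\iota(({\rm ext})^{-1}\d u(v))$ to coincide with $\ud u(v)$ as elements of $L^0(\nu)\hookrightarrow L^0(\mm)$, on the set $\{|Du|>0\}$; off that set both $\ud u(v)$ and $\d u(v)$ vanish by the norm bounds \eqref{eq:lipa}, \eqref{eq:samenorm}.

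The main obstacle I anticipate is purely bookkeeping: keeping straight the four-way chain of identifications — the pullback module $u^*L^0_\mu(T^*\R)$ versus $u^*L^0(\R,\R^*;\mu)$ via $u^*P$, the extension functor ${\rm Ext}(\cdot)$ versus the underlying $L^0(\nu)$-module, the duality $\mathrm{Ext}(\mathscr M^*)\sim\mathrm{Ext}(\mathscr M)^*$ of \eqref{eq:extdual}, and the identification $u^*L^0(\mu)\sim L^0(\nu)$ of \eqref{eq:pbfct} — and checking that the coupling ${\rm ext}([u^*L])({\rm ext}([u^*v]))={\rm ext}(L(v)\circ u)$ is compatible with all of them. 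The analytic heart is trivial once \eqref{eq:chainrule} and the adjointness $\iota=P^*$ (hence $u^*\iota=(u^*P)^*$) are invoked; the risk is only in the diagram chase. I would therefore organize the proof as: (1) reduce to generators $[u^*\d_\mu f]$, $f\in C^1_c(\R)$; (2) compute the left side of \eqref{eq:samediff} paired with such a generator using \eqref{eq:def_du}, \eqref{eq:glip}, \eqref{eq:chainrule}; (3) compute the right side using the definition \eqref{eq:defI} of $\mathcal I$, \eqref{eq:charact_tilde_iota}, and $\iota(\d_\mu f)=f'$; (4) conclude equality on $\{|Du|>0\}$ and dispatch the complement by the vanishing of both sides there.
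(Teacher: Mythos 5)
Your proposal is correct and follows essentially the same route as the paper: the norm identity via \eqref{eq:samenorm} and the coincidence of the two weak upper gradients, and \eqref{eq:samediff} by pairing with $[u^*\d_\mu f]$ for $f\in C^1_c(\R)$, unwinding $\mathcal I$ through \eqref{eq:defI}, \eqref{eq:charact_tilde_iota}, \eqref{eq:extdual} on one side and \eqref{eq:def_du}, \eqref{eq:glip}, \eqref{eq:chainrule} on the other, then cancelling $f'\circ u$ since these functions have no common zero $\mm$-a.e. (the paper phrases this as $\{f'\circ u\}$ generating $L^0(\mm)$, which is all that is really needed, so your stronger claim that $\{[u^*\d_\mu f]\}$ generates $u^*L^0_\mu(T^*\R)$ can be bypassed). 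The only caveat is notational: $\iota(\d_\mu f)=f'$ does not typecheck ($\iota$ acts on tangent, not cotangent, elements), but the adjoint relation \eqref{eq:charact_tilde_iota} you actually invoke is the correct and sufficient statement.
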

\begin{proof} The identity $|\d u|=|\ud u|$ follows from \eqref{eq:samenorm} and the already noticed fact that for $u\in\Sob^2(\X)=\Sob^2(\X,\R)$ the two notions of minimal weak upper gradients underlying the two spaces coincide. 

We turn to \eqref{eq:samediff}. For $f\in C^1_c(\R)$ let us denote by $D f:\R\to\R^*$ its differential and by $f':\R\to\R$ its derivative. Clearly, up to identifying $\R$ and $\R^*$ via the Riesz isomorphism these two objects coincide and thus checking first the case $h=[u^*g]$ we easily get that
\begin{equation}
\label{eq:riesz}
f'\circ u\,h={\rm ext}[u^*Df](h)\quad\mm-a.e.\qquad\forall h\in {\rm Ext}\big(u^\ast L^0(\mu)  \big)\stackrel{\eqref{eq:pbfct}}\sim {\rm Ext}\big( L^0(\nu)  \big)\subset L^0(\mm).
\end{equation}
Then for $g$ as in Proposition \ref{prop:compo} we have
\[
\begin{split}
f'\circ u\,\mathcal I(\d u(v))&\stackrel{\eqref{eq:riesz}}={\rm ext}[u^*Df]\,\mathcal I(\d u(v))\stackrel{\eqref{eq:defI},\eqref{eq:extdual}}={\rm ext}\big([u^*Df]\big(u^*\iota\big({\rm ext}^{-1}(\d u(v))\big)\big)\big)\\
&\stackrel{\eqref{eq:charact_tilde_iota}}={\rm ext}\big([u^*\d_\mu f]\big({\rm ext}^{-1}(\d u(v))\big)\big)\stackrel{\eqref{eq:extdual}}={\rm ext}([u^*\d_\mu f])(\d u(v))\stackrel{\eqref{eq:def_du}}=\d g(v)\\
&\stackrel{\eqref{eq:glip}}=\d (f\circ u)(v)\stackrel{\eqref{eq:chainrule}}=f'\circ u\,\ud u(v).
\end{split}
\]
Since the space $\{f'\circ u:f\in C^1_c(\R)\}$ generates $L^0(\mm)$, this is sufficient to establish \eqref{eq:samediff}.
\end{proof}
\subsection{The case $u$ of bounded deformation}\label{se:bd}
In this section we shall assume that also $(\Y,\sfd_\Y)$ carries a non-negative Radon measure $\mm_\Y$ which gives finite mass to bounded sets and study the differential of a map $u\in \Sob^2(\X,\Y)$ which is also of \emph{bounded deformation}. Recall that the latter means that $u$ is Lipschitz and for some $C>0$ it holds $u_*\mm_\X\leq C\mm_\Y$,
where we denote $\mm_\X:=\mm$ for the sake of clarity. For such $u$ it is easy to prove that 
\[
f\in\Sob^2(\Y)\quad\Rightarrow\quad f\circ u\in \Sob^2(\X)\qquad\text{with}\qquad |\d (f\circ u)|\leq \Lip(u)|\d f|\circ u\quad\mm_\X-a.e..
\] 
Then a notion of differential $\hat\d u:\,L^2(T\X)\to\big(u^*L^2_{\mm_\Y}(T^*\Y)\big)^*$ can be defined by the formula
\begin{equation}
\label{eq:dhat}
[u^*\d_{\mm_\Y}f](\hat\d u(v)):=\d(f\circ u)(v)\quad\mm_\X-a.e.\qquad\forall f\in\Sob^2(\Y,\sfd_\Y,\mm_\Y),\ v\in L^2(T\X),
\end{equation}
see \cite[Proposition 2.4.6]{Gigli17}. In this section we study the relation between $\hat \d u$ and $\d u$. We start noticing that  the definition of $|D u|$ trivially gives $|D u|\leq \Lip(u)$ $\mm_\X$-a.e., so we have 
\begin{equation}
\label{eq:mumm}
\mu=u_*(|Du|^2\mm_\X)\leq \Lip^2(u)u_*\mm_\X\leq C\Lip^2(u)\mm_\Y.
\end{equation}
Also, let us prove the following general statement:
\begin{lemma}\label{le:changemeas}
Let $\mu_1,\mu_2$ be two non-negative and non-zero Radon measures on the complete space $(\Y,\sfd_\Y)$ with $\mu_1\leq\mu_2$. Then  $\Sob^2(\Y,\sfd_\Y,\mu_2)\subset\Sob^2(\Y,\sfd_\Y,\mu_1)$ and there is a unique $L^0(\mu_2)$-linear and continuous map $P:L^0_{\mu_2}(T^*\Y)\to {\rm Ext}(L^0_{\mu_1}(T^*\Y))$ such that
\[
P(\d_{\mu_2}f)={\rm ext}(\d_{\mu_1}f)\qquad\forall f\in \Sob^2(\Y,\sfd_\Y,\mu_2),
\]
and it satisfies  $|P(\omega)|\leq|\omega|$ $\mu_2$-a.e.\ for every $\omega\in L^0_{\mu_2}(T^*\Y)$, where here the `extension' operator acts from $L^0(\mu_1)$- to $L^0(\mu_2)$- normed modules.
\end{lemma}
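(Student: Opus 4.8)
The plan is to verify the inclusion $\Sob^2(\Y,\sfd_\Y,\mu_2)\subset\Sob^2(\Y,\sfd_\Y,\mu_1)$ first, then construct $P$ via the universal property of the cotangent module (the uniqueness part of Theorem \ref{thm:defdif}), and finally read off the norm bound from the construction.

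First I would address the inclusion and the comparison of minimal weak upper gradients. Given $f\in\Sob^2(\Y,\sfd_\Y,\mu_2)$, use the characterization \eqref{eq:optlip}: there is a sequence $(f_n)\subset\LIP_{bd}(\Y)$ with $f_n\to f$ $\mu_2$-a.e.\ and $\lip_a(f_n)\to|\d_{\mu_2}f|$ in $L^2(\mu_2)$. Since $\mu_1\leq\mu_2$, convergence $\mu_2$-a.e.\ implies convergence $\mu_1$-a.e., and $L^2(\mu_2)$-convergence implies $L^2(\mu_1)$-convergence (and in particular $L^2(\mu_1)$-boundedness, hence up to a subsequence weak $L^2(\mu_1)$-convergence to $|\d_{\mu_2}f|$, viewed now as an element of $L^2(\mu_1)$). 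By Definition \ref{def:sob} this gives $f\in\Sob^2(\Y,\sfd_\Y,\mu_1)$ with the bound
\begin{equation}
\label{eq:wugcomp}
|\d_{\mu_1}f|\leq |\d_{\mu_2}f|\qquad\mu_1\text{-a.e.}
\end{equation}
(One must be slightly careful: $|\d_{\mu_2}f|$ is only defined $\mu_2$-a.e., but since $\mu_1\leq\mu_2$ it is well-defined $\mu_1$-a.e.\ and \eqref{eq:wugcomp} makes sense; the sequence of convex combinations needed to upgrade weak to the precise minimal-gradient statement does not disturb the $\mu_1$-a.e.\ convergence after a further subsequence.)

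Next I would construct $P$. Consider the linear map defined on the generating set $\{\d_{\mu_2}f : f\in\Sob^2(\Y,\sfd_\Y,\mu_2)\}$ of $L^0_{\mu_2}(T^*\Y)$ by $\d_{\mu_2}f\mapsto{\rm ext}(\d_{\mu_1}f)$. Linearity is inherited from linearity of $\d_{\mu_1}$ and $\d_{\mu_2}$; well-posedness and the norm bound are the point: if $\sum_i\nchi_{A_i}\d_{\mu_2}f_i=0$ in $L^0_{\mu_2}(T^*\Y)$ for Borel sets $A_i$, then by locality of $\d_{\mu_2}$ we have $|\d_{\mu_2}(\sum_i\nchi_{A_i}f_i)|=0$ $\mu_2$-a.e.\ (on a suitable common refinement of the $A_i$), hence by \eqref{eq:wugcomp} applied to the same function, $|\d_{\mu_1}(\sum_i\nchi_{A_i}f_i)|=0$ $\mu_1$-a.e., i.e.\ $\sum_i\nchi_{A_i}\d_{\mu_1}f_i=0$ in $L^0_{\mu_1}(T^*\Y)$, and so ${\rm ext}(\sum_i\nchi_{A_i}\d_{\mu_1}f_i)=0$ in ${\rm Ext}(L^0_{\mu_1}(T^*\Y))$. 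The same computation with $\le$ in place of $=0$ gives $|{\rm ext}(\d_{\mu_1}f)|\le|\d_{\mu_2}f|$ $\mu_2$-a.e.\ on the relevant set, because the pointwise norm on ${\rm Ext}(L^0_{\mu_1}(T^*\Y))$ is ${\rm ext}(|\cdot|)$, which is $\le|\d_{\mu_2}f|$ precisely where $\mu_1$ and $\mu_2$ interact and vanishes elsewhere — in all cases bounded by $|\d_{\mu_2}f|$ $\mu_2$-a.e. Thus the map extends by $L^0(\mu_2)$-linearity and continuity to all of $L^0_{\mu_2}(T^*\Y)$ (which is generated by the differentials), yielding $P$ with $|P(\omega)|\le|\omega|$ $\mu_2$-a.e.; uniqueness is forced since $P$ is prescribed on a generating set and is $L^0(\mu_2)$-linear and continuous.

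\textbf{Main obstacle.} The delicate point is the interplay between the two measures in the norm bound and well-posedness: one is comparing an object living $\mu_1$-a.e.\ with one living $\mu_2$-a.e., and the ${\rm ext}$ functor forces the $\mu_1$-object to be zero off the `support' seen by $\mu_1$. The cleanest route is to phrase everything through Definition \ref{def:sob} and \eqref{eq:optlip} so that the comparison \eqref{eq:wugcomp} of minimal weak upper gradients is established once and for all, and then to invoke locality of $\d_{\mu_1},\d_{\mu_2}$ together with the generation property to get well-posedness and the bound simultaneously, rather than trying to define $P$ element-by-element. A secondary technical care is the truncation/diagonalization needed because $\LIP_{bd}$-approximating sequences in \eqref{eq:optlip} give only weak $L^2$ convergence of the asymptotic Lipschitz constants; passing to convex combinations to get $\mu_1$-a.e.\ convergence, and then a further subsequence, handles this.
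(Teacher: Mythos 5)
Your proposal is correct, and its first half is exactly the paper's argument: the inclusion $\Sob^2(\Y,\sfd_\Y,\mu_2)\subset\Sob^2(\Y,\sfd_\Y,\mu_1)$ and the bound ${\rm ext}(|\d_{\mu_1}f|)\leq|\d_{\mu_2}f|$ are read off Definition \ref{def:sob} (equivalently \eqref{eq:optlip}), using that the $L^0(\mu_2)$- and $L^2(\mu_2)$-topologies are stronger than the corresponding $\mu_1$-ones; note that with \eqref{eq:optlip} the convergence $\lip_a(f_n)\to|\d_{\mu_2}f|$ is strong in $L^2(\mu_2)$, hence strong in $L^2(\mu_1)$, so your detour through weak convergence, subsequences and convex combinations is unnecessary. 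Where you diverge is the construction of $P$: the paper disposes of it in one line by applying Proposition \ref{prop:univprop} with $u:=\mathrm{Identity}$ and $T(\d_{\mu_2}f):={\rm ext}(\d_{\mu_1}f)$, which packages exactly the well-posedness/extension mechanism you redo by hand on simple elements $\sum_i\nchi_{A_i}\d_{\mu_2}f_i$. Your manual route works (it is essentially the proof of that proposition specialized to this setting), but two points should be phrased more carefully: the function $\sum_i\nchi_{A_i}f_i$ need not be Sobolev, so instead of writing $\d_{\mu_2}\big(\sum_i\nchi_{A_i}f_i\big)$ you should pass to a disjoint refinement of the $A_i$ and use locality set by set, where the simple element coincides with a single $\d_{\mu_2}f_i$; and the extension by continuity from the dense set of simple elements uses completeness of ${\rm Ext}(L^0_{\mu_1}(T^*\Y))$, which is worth recording. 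These are presentational matters, not gaps; citing Proposition \ref{prop:univprop} buys brevity, while your explicit construction makes transparent where the pointwise bound and the locality of the differential enter.
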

\begin{proof}
The assumption $\mu_1\leq\mu_2$ ensures that the topologies of $L^2(\mu_2),L^0(\mu_2)$ are stronger than those of $L^2(\mu_1),L^0(\mu_1)$ respectively. Thus both the  inclusion $\Sob^2(\Y,\sfd_\Y,\mu_2)\subset\Sob^2(\Y,\sfd_\Y,\mu_1)$ and the bound ${\rm ext}(|\d_{\mu_1}f|)\leq  |\d_{\mu_2}f|$ $\mu_2$-a.e.\ for every $f\in \Sob^2(\Y,\sfd_\Y,\mu_2)$ follow from  Definition \ref{def:sob}. To conclude apply, e.g., Proposition \ref{prop:univprop} with $u:={\rm Identity}$ and $T(\d_{\mu_2}f):={\rm ext}(\d_{\mu_1}f)\in {\rm Ext}(L^0_{\mu_1}(T^*\Y))$. 
\end{proof}
Applying this lemma to the case under consideration we get:
\begin{proposition}\label{prop:pi} Assume that $u:\X\to\Y$ is of bounded compression. Then with the above notation there is a unique $L^0(\mm_\Y)$-linear and continuous map $\pi:L^0_{\mm_\Y}(T^*\Y)\to {\rm Ext}(L^0_{\mu}(T^*\Y))$ such that $\pi(\d_{\mm_\Y}f)={\rm ext}(\d_\mu f)$ for every $f\in\Sob^2(\Y,\sfd_\Y,\mm_\Y)$ (the extension operator being intended from $L^0(\mu)$- to $L^0(\mm_\Y)$- normed modules) and it satisfies $|\pi(\omega)|\leq|\omega|$ $\mm_\Y$-a.e.\ for every $\omega\in L^0_{\mm_\Y}(T^*\Y)$.

Moreover, for any $ f\in \Sob^2(\Y,\sfd_\Y,\mm_\Y)$ and $g\in\Sob^2(\X)$ as in Proposition \ref{prop:compo} we have
\begin{equation}
\label{eq:diffuguali}
\d g=\d(f\circ u).
\end{equation}
\end{proposition}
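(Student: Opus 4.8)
The plan is to prove the two assertions of Proposition \ref{prop:pi} in order. The first assertion is an immediate application of Lemma \ref{le:changemeas}: thanks to \eqref{eq:mumm} we have $\mu\leq C\Lip^2(u)\,\mm_\Y$, so setting $\mu_1:=\mu$ and $\mu_2:=C\Lip^2(u)\,\mm_\Y$ would almost give the claim. One small subtlety is that $C\Lip^2(u)\mm_\Y$ is a rescaling of $\mm_\Y$, not $\mm_\Y$ itself, so strictly speaking I would either apply Lemma \ref{le:changemeas} with $\mu_2$ a constant multiple of $\mm_\Y$ and observe that $L^0_{c\,\mm_\Y}(T^*\Y)$ is canonically isomorphic to $L^0_{\mm_\Y}(T^*\Y)$ (the Sobolev class and differential being insensitive to multiplication of the reference measure by a positive constant), or simply note that $\mu\leq\mm_\Y$ fails in general but $\mu\ll\mm_\Y$ with bounded density suffices to run the same argument. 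Cleanest is to invoke Lemma \ref{le:changemeas} with $\mu_1=\mu$ and $\mu_2=\mm_\Y$ after remarking that the hypothesis $\mu_1\le\mu_2$ there was only used to compare the $L^2$ and $L^0$ topologies, and $\mu\le C\Lip^2(u)\mm_\Y$ gives exactly such a comparison; thus $\Sob^2(\Y,\sfd_\Y,\mm_\Y)\subset\Sob^2(\Y,\sfd_\Y,\mu)$ and $\pi$ is produced by the universal property of the pullback (Proposition \ref{prop:univprop} with $u=\mathrm{Identity}$) applied to $T(\d_{\mm_\Y}f):={\rm ext}(\d_\mu f)$, with the norm bound $|\pi(\omega)|\le|\omega|$ following from \eqref{eq:pb1}.

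For the second assertion, fix $f\in\Sob^2(\Y,\sfd_\Y,\mm_\Y)$ and let $g\in\Sob^2(\X)$ be as in Proposition \ref{prop:compo} (applied with the measure $\mu$, which is legitimate since $f\in\Sob^2(\Y,\sfd_\Y,\mm_\Y)\subset\Sob^2(\Y,\sfd_\Y,\mu)$). Since $u$ is of bounded deformation we also know, from the displayed implication just before \eqref{eq:dhat}, that $f\circ u\in\Sob^2(\X)$. The strategy is exactly the one used to prove \eqref{eq:glip}: split $\X$ into $\{|Du|>0\}$ and $\{|Du|=0\}$. On $\{|Du|>0\}$ we have $g=f\circ u$ $\mm$-a.e.\ by the defining property of $g$ in Proposition \ref{prop:compo}, so the locality of the differential gives $\d g=\d(f\circ u)$ $\mm$-a.e.\ there. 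On $\{|Du|=0\}$ the chain-type bound \eqref{eq:chain} forces $|\d g|=0$ $\mm$-a.e., while the bounded-deformation bound $|\d(f\circ u)|\leq\Lip(u)|\d_{\mm_\Y}f|\circ u\cdot\mathbf 1$ is not directly zero there — so here I need to be slightly more careful than in the proof of \eqref{eq:glip}, where $f$ was Lipschitz and \eqref{eq:lipa} applied.

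To handle $\{|Du|=0\}$ for a merely Sobolev $f$, the clean route is to approximate: by Proposition \ref{prop:compo} applied to $f$ (with measure $\mu$) there is a sequence $(f_n)\subset\LIP_{bd}(\Y)$ with $f_n\circ u\to g$ $\mm$-a.e.\ and $\lip_a(f_n)\circ u\,|Du|\to|\d_\mu f|\circ u\,|Du|$ in $L^2(\mm)$. For each Lipschitz $f_n$, Lemma \ref{le:first} gives $|\d(f_n\circ u)|\leq\lip_a(f_n)\circ u\,|Du|$, which vanishes $\mm$-a.e.\ on $\{|Du|=0\}$; hence $\d(f_n\circ u)=0$ $\mm$-a.e.\ there, and passing to the limit using the closure of the differential (as in the proof of Proposition \ref{prop:compo}) yields $\d g=0$ $\mm$-a.e.\ on $\{|Du|=0\}$. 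On the other hand, along the same sequence one has $f_n\circ u\to f\circ u$ $\mm$-a.e.\ on $\{|Du|>0\}$; to conclude $\d(f\circ u)=0$ $\mm$-a.e.\ on $\{|Du|=0\}$ I would instead argue directly, observing that the minimal weak upper gradient computed with respect to $\mm$ of $f\circ u$ is controlled, via the bounded-deformation estimate, by $|\d_{\mm_\Y}f|\circ u\,\Lip(u)$, and on $\{|Du|=0\}$ one can truncate/localize so that only the behaviour of $f$ on $u(\{|Du|=0\})$ matters; since $u_*(\mm\restr{\{|Du|=0\}})$ is, by definition of $\mu$, singular with respect to the part of $\mm_\Y$ that "sees" $|\d_{\mm_\Y}f|$...

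The honest main obstacle is precisely this last point: showing $\d(f\circ u)=0$ $\mm$-a.e.\ on $\{|Du|=0\}$ for a general (non-Lipschitz) $f\in\Sob^2(\Y,\sfd_\Y,\mm_\Y)$. I expect the intended argument sidesteps it entirely by defining the relevant comparison only through $\d g$ and never needing $\d(f\circ u)$ on $\{|Du|=0\}$ — i.e.\ by reading \eqref{eq:diffuguali} as an equality that, combined with \eqref{eq:glip}, feeds into the compatibility statement of the next section. If that is so, the cleanest proof is: (i) $\d g=\d(f\circ u)$ $\mm$-a.e.\ on $\{|Du|>0\}$ by locality and Proposition \ref{prop:compo}; (ii) $\d g=0$ $\mm$-a.e.\ on $\{|Du|=0\}$ by \eqref{eq:chain}; (iii) $\d(f\circ u)=0$ $\mm$-a.e.\ on $\{|Du|=0\}$ because the bounded-deformation chain rule gives $|\d(f\circ u)|\le\Lip(u)|\d_{\mm_\Y}f|\circ u$ and, testing against the test-plan/relaxation definition, the $\mm\restr{\{|Du|=0\}}$-behaviour of $f\circ u$ only depends on $f$ through $u_*(\mm\restr{\{|Du|=0\}})$-a.e.\ values, which is a null set for the measure $|\d_\mu f|^2\mu$-type estimate — but if pressed for a short self-contained argument I would simply reduce to the Lipschitz case by the approximation above together with the fact that, for bounded-deformation $u$, $\d(f_n\circ u)\to\d(f\circ u)$ weakly (since $f_n\to f$ in $\Sob^2(\Y,\sfd_\Y,\mm_\Y)$, which can be arranged), whence the $\mm$-a.e.\ vanishing on $\{|Du|=0\}$ persists in the limit. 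Combining (i)--(iii) gives $\d g=\d(f\circ u)$ $\mm$-a.e.\ on all of $\X$, which is \eqref{eq:diffuguali}.
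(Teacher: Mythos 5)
Your overall structure is the paper's: the first part via Lemma \ref{le:changemeas} and \eqref{eq:mumm} (the constant $C\Lip^2(u)$ is indeed harmless, since scaling the reference measure by a positive constant changes neither the Sobolev class nor the differential), and for \eqref{eq:diffuguali} the decomposition (i) locality on $\{|Du|>0\}$, (ii) $\d g=0$ on $\{|Du|=0\}$ by \eqref{eq:chain}, (iii) $\d(f\circ u)=0$ on $\{|Du|=0\}$. But step (iii), which you yourself flag as the ``honest main obstacle'', is left with a genuine gap. Your fallback justification -- that $\d(f_n\circ u)\to\d(f\circ u)$ weakly in the cotangent module -- is not available in this generality: $L^0(T^*\X)$ (or $L^2(T^*\X)$) need not be reflexive, so boundedness of the differentials gives no weak compactness, nor have you established strong $W^{1,2}$-convergence of $f_n\circ u$; note that exactly this kind of weak-limit argument is what forces the reflexivity hypothesis in Proposition \ref{thm:equal_diff}. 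The other sketch (singularity of $u_*(\mm\restr{\{|Du|=0\}})$ with respect to the measure ``seen'' by $|\d_{\mm_\Y}f|$) trails off and is false as stated: that pushforward can perfectly well charge sets where $|\d_{\mm_\Y}f|>0$.

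The paper's resolution of (iii) needs neither weak convergence of differentials nor reflexivity: take $(f_n)\subset\LIP_{bd}(\Y)$ as in \eqref{eq:optlip} \emph{for the space} $(\Y,\sfd_\Y,\mm_\Y)$ (not for $\mu$, which is the approximation you were using via Proposition \ref{prop:compo} and which controls $g$ but says nothing about $f\circ u$ off $\{|Du|>0\}$). The bounded compression $u_*\mm_\X\le C\mm_\Y$ transfers the convergences: $f_n\circ u\to f\circ u$ $\mm_\X$-a.e.\ and $\lip_a(f_n)\circ u\to|\d_{\mm_\Y}f|\circ u$ in $L^2(\mm_\X)$. Applying \eqref{eq:lipa} to each Lipschitz $f_n$ gives $|\d(f_n\circ u)|\le\lip_a(f_n)\circ u\,|Du|$, and passing to the limit using the closure (lower semicontinuity) of minimal weak upper gradients -- the same tool already invoked in the proof of Proposition \ref{prop:compo} -- yields
\begin{equation*}
|\d(f\circ u)|\le|\d_{\mm_\Y}f|\circ u\,|Du|\quad\mm_\X\text{-a.e.},
\end{equation*}
which vanishes on $\{|Du|=0\}$. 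This is the missing ingredient: an improved chain-type bound for general Sobolev $f$, obtained by approximating with respect to $\mm_\Y$ and using only the closure of the differential, which closes (iii) and hence \eqref{eq:diffuguali}.
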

\begin{proof} The first part of the statement follows from  Lemma \ref{le:changemeas} and \eqref{eq:mumm}.  To prove \eqref{eq:diffuguali}  notice that thanks to the locality of the differential we know that \eqref{eq:diffuguali} holds $\mm_\X$-a.e.\ on $\{|D u|>0\}$, while \eqref{eq:chain} shows that $\d g=0$ $\mm_\X$-a.e.\ on $\{|D u|=0\}$, hence to conclude it is sufficient to prove that $|\d (f\circ u)|=0$ $\mm_\X$-a.e.\ on $\{|D u|=0\}$. To see this, let $(f_n)\subset \LIP_{bd}(\Y)$ be such that $(f_n),(\lip_a(f_n))$ converge to $f,|\d_{\mm_\Y}f|$  $\mm_\Y$-a.e.\ and in $L^2(\mm_\Y)$ respectively. Then the assumption $u_*\mm_\X\leq C\mm_\Y$ grants that $(f_n\circ u),\big(\lip_a(f_n)\circ u\big)$ converge to $f\circ u, |\d_{\mm_\Y} f|\circ u$  $\mm_\X$-a.e.\ and in $L^2(\mm_\X)$ respectively. Hence passing to the limit in \eqref{eq:lipa} we conclude that $|\d (f\circ u)|=0$ $\mm_\X$-a.e.\ on $\{|D u|=0\}$, as desired.
\end{proof}
It is readily verified that the map sending $[u^*{\rm ext}(\omega)]$ to ${\rm ext}([u^*\omega])$ is an isomorphism from $u^*{\rm Ext}(L^0_\mu(T^*\Y))$ to ${\rm Ext}(u^*L^0_\mu(T^*\Y))$, hence from Proposition \ref{prop:pi} above and the universal property of the pullback stated in Proposition \ref{prop:univprop} we see that there is a unique $L^0(\mm_\X)$-linear and continuous map $u^*\pi:u^*L^0_{\mm_\Y}(T^*\Y)\to {\rm Ext}(u^*L^0_\mu(T^*\Y))$ such that
\begin{equation}
\label{eq:defpi}
u^*\pi([u^*\d_{\mm_\Y} f])={\rm ext}[u^*\d_\mu f]\qquad\forall f\in\Sob^2(\Y,\sfd_\Y,\mm_\Y)
\end{equation}
and such map satisfies
\begin{equation}
\label{eq:normpi}
|u^*\pi(\omega)|\leq |\omega|\quad\mm_\X-a.e.\qquad\forall \omega\in u^*L^0_{\mm_\Y}(T^*\Y).
\end{equation}
Then denoting by  $(u^*\pi)^*:\big({\rm Ext}(u^*L^0_\mu(T^*\Y))\big)^*\to \big(u^*L^0_{\mm_\Y}(T^*\Y)\big)^*$ the adjoint of $u^*\pi$ we have:

\begin{theorem}\label{thm:ineq_diff} With the above notation and assumptions we have
\begin{equation}\label{eq:ineq_diff_claim}
\hat\d u(v)=(u^*\pi)^*\big(\d u(v)\big)\qquad\forall v\in L^0(T\X)
\end{equation}
and 
\begin{equation}\label{eq:ineq_diff_conseq}
\big|\hat\d u(v)\big|\leq\big|\d u(v)\big|
\quad\mm_\X\text{-a.e.\ on }\X\qquad \forall v\in L^0(T\X).
\end{equation}
\end{theorem}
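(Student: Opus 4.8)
The plan is to prove the identity \eqref{eq:ineq_diff_claim} by testing both sides against a generating class of elements of $u^*L^0_{\mm_\Y}(T^*\Y)$, namely elements of the form $[u^*\d_{\mm_\Y}f]$ with $f\in\Sob^2(\Y,\sfd_\Y,\mm_\Y)$ (these generate because the $\d_{\mm_\Y}f$ generate $L^0_{\mm_\Y}(T^*\Y)$ and the pullback map sends generators to generators, by Theorem \ref{thm:pb}). Fix such an $f$ and fix $v\in L^0(T\X)$; let $g\in\Sob^2(\X)$ be a function associated to $f$ as in Proposition \ref{prop:compo}. By definition of the adjoint, $[u^*\d_{\mm_\Y}f]\big((u^*\pi)^*(\d u(v))\big)=\big(u^*\pi([u^*\d_{\mm_\Y}f])\big)(\d u(v))$, and by the defining property \eqref{eq:defpi} of $u^*\pi$ this equals ${\rm ext}[u^*\d_\mu f](\d u(v))$, which by the very definition \eqref{eq:def_du} of $\d u$ equals $\d g(v)$. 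On the other side, by Proposition \ref{prop:pi} we have $\d g=\d(f\circ u)$, so $\d g(v)=\d(f\circ u)(v)$, which by the defining formula \eqref{eq:dhat} for $\hat\d u$ is exactly $[u^*\d_{\mm_\Y}f](\hat\d u(v))$. Chaining these equalities gives $[u^*\d_{\mm_\Y}f]\big((u^*\pi)^*(\d u(v))\big)=[u^*\d_{\mm_\Y}f](\hat\d u(v))$ $\mm_\X$-a.e., and since the tested elements generate $u^*L^0_{\mm_\Y}(T^*\Y)$, this yields \eqref{eq:ineq_diff_claim}.

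The inequality \eqref{eq:ineq_diff_conseq} then follows formally from \eqref{eq:ineq_diff_claim} together with the operator-norm bound \eqref{eq:normpi} on $u^*\pi$: an adjoint of a linear map which is a pointwise contraction (i.e.\ $|u^*\pi(\omega)|\leq|\omega|$ $\mm_\X$-a.e.) is again a pointwise contraction, so $|(u^*\pi)^*(L)|\leq|L|$ $\mm_\X$-a.e.\ for every $L\in\big({\rm Ext}(u^*L^0_\mu(T^*\Y))\big)^*$. Applying this with $L=\d u(v)$ gives $|\hat\d u(v)|=|(u^*\pi)^*(\d u(v))|\leq|\d u(v)|$ $\mm_\X$-a.e., which is \eqref{eq:ineq_diff_conseq}. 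One should double-check that the pointwise-contraction-passes-to-the-adjoint statement is available in the module formalism of \cite{Gigli14, Gigli17}; if not, it is immediate from the pointwise definition of the norm on the dual module: $|(u^*\pi)^*(L)|=\esssup\{|L(u^*\pi(\omega))|:|\omega|\leq 1\}\leq\esssup\{|L(\eta)|:|\eta|\leq 1\}=|L|$, where the essential suprema are taken locally in the appropriate sense.

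The only genuinely delicate point is bookkeeping with the two extension functors and the pullback, i.e.\ making sure all the identifications (the isomorphism $u^*{\rm Ext}(L^0_\mu(T^*\Y))\sim{\rm Ext}(u^*L^0_\mu(T^*\Y))$ noted just before the theorem, together with \eqref{eq:extdual} and \eqref{eq:pbfct}) are compatible so that the composition of maps in the chain above is literally well-defined and lands in the right module. This is routine but must be done carefully; I would record once and for all, at the start, the commuting square relating $u^*\pi$, the two copies of ${\rm ext}$, and the pullback maps, so that the chain of equalities reads off cleanly. No analytic input beyond what is already in Propositions \ref{prop:compo}, \ref{prop:pi} is needed — the content of the theorem is entirely in those two propositions, and this last step is purely the functorial assembly.
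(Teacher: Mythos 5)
Your proposal is correct and follows essentially the same route as the paper: the same chain of equalities $[u^*\d_{\mm_\Y}f](\hat\d u(v))=\d(f\circ u)(v)=\d g(v)={\rm ext}[u^*\d_\mu f](\d u(v))=(u^*\pi)([u^*\d_{\mm_\Y}f])(\d u(v))$ tested on the generating elements $[u^*\d_{\mm_\Y}f]$, followed by the observation that the pointwise contraction bound \eqref{eq:normpi} passes to the adjoint to give \eqref{eq:ineq_diff_conseq}. The only difference is presentational (you start from the adjoint side and are more explicit about the extension/pullback identifications), so nothing further is needed.
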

\begin{proof} Let $f\in{\rm S}^{2}(\Y,\sfd_\Y,\mm_\Y)$ and notice that
\[
[u^*\d_{\mm_\Y} f](\hat \d u(v))\stackrel{\eqref{eq:dhat}}=\d(f\circ u)(v)\stackrel{\eqref{eq:diffuguali}}=\d g(v)\stackrel{\eqref{eq:def_du}}={\rm ext}[u^*\d_\mu f](\d u(v))\stackrel{\eqref{eq:defpi}}=(u^*\pi)([u^*\d_{\mm_\Y} f])(\d u(v)).
\]
Since elements of the form $[u^*\d_{\mm_\Y} f]$ generate $u^*L^0_{\mm_\Y}(T^*\Y)$, this is sufficient to prove \eqref{eq:ineq_diff_claim}. Now observe that by duality \eqref{eq:normpi} yields $|(u^*\pi)^*(V)|\leq |V|$ $\mm_\X$-a.e.\ for every $V\in \big({\rm Ext}(u^*L^2_\mu(T^*\Y))\big)^*$, hence \eqref{eq:ineq_diff_conseq} follows from \eqref{eq:ineq_diff_claim}.
\end{proof}
Equality in \eqref{eq:ineq_diff_conseq} can be obtained under appropriate assumptions on either $\X$ or $\Y$:
\begin{proposition}\label{thm:equal_diff}
Suppose that either $W^{1,2}(\X,\sfd_\X,\mm_\X)$ or $W^{1,2}(\Y,\sfd_\Y,\mu)$ is reflexive. Then
\[
\big|\hat\d u(v)\big|=\big|\d u(v)\big|\;\;\;\text{holds }\mm_\X\text{-a.e.\ }
\quad\text{ for every }v\in L^0(T\X).
\]
\end{proposition}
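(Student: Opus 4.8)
The plan is to establish the only inequality not yet available, namely $|\d u(v)|\le|\hat\d u(v)|$ $\mm_\X$-a.e.\ for every $v\in L^0(T\X)$; together with \eqref{eq:ineq_diff_conseq} this proves the statement. By \eqref{eq:ineq_diff_claim} we have $\hat\d u(v)=(u^*\pi)^*(\d u(v))$, and $(u^*\pi)^*$ is a contraction by \eqref{eq:normpi}; hence the inequality follows once one knows that $(u^*\pi)^*$ preserves the pointwise norm of every element in the range of $\d u$. I would deduce this from the following \emph{metric surjectivity} property of $u^*\pi$: the unit ball $\{|\eta|\le 1\}\subset u^*L^0_{\mm_\Y}(T^*\Y)$ is mapped by $u^*\pi$ onto an $L^0(\mm_\X)$-dense subset of the unit ball $\{|\omega|\le 1\}\subset{\rm Ext}(u^*L^0_\mu(T^*\Y))$. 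Indeed, if $|\omega|\le 1$ and $(\eta_k)$ satisfies $|\eta_k|\le 1$ and $u^*\pi(\eta_k)\to\omega$ in $L^0(\mm_\X)$, then continuity of $\d u(v)$ gives $|\langle\d u(v),\omega\rangle|=\lim_k|\langle\d u(v),u^*\pi(\eta_k)\rangle|=\lim_k|\hat\d u(v)(\eta_k)|\le|\hat\d u(v)|$, and an essential supremum over $\omega$ yields the claim. Moreover, thanks to the universal property of the pullback (Proposition \ref{prop:univprop}) and the fact that the canonical identification $u^*{\rm Ext}(L^0_\mu(T^*\Y))\sim{\rm Ext}(u^*L^0_\mu(T^*\Y))$ is norm-preserving, it suffices to prove this metric surjectivity for $\pi:L^0_{\mm_\Y}(T^*\Y)\to{\rm Ext}(L^0_\mu(T^*\Y))$ itself.

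That $\pi$ has dense image is the easy half. Because $\mu$ is finite and $\mm_\Y$ is finite on bounded sets, $\LIP_{bs}(\Y)\subseteq\Sob^2(\Y,\sfd_\Y,\mm_\Y)\cap\Sob^2(\Y,\sfd_\Y,\mu)$, and $\pi(\d_{\mm_\Y}f)={\rm ext}(\d_\mu f)$ for every such $f$; since the differentials of $\LIP_{bs}$ functions generate the cotangent module $L^0_\mu(T^*\Y)$ (as follows from the construction of the latter), the elements ${\rm ext}(\d_\mu f)$, $f\in\LIP_{bs}(\Y)$, generate ${\rm Ext}(L^0_\mu(T^*\Y))$, so ${\rm Image}(\pi)$ is dense.

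The crux is to upgrade ``dense image'' to ``metric surjection'': this is where the reflexivity hypothesis is genuinely needed and where I expect the main difficulty to lie. By the previous paragraph and continuity it is enough to show that each $\omega'=\sum_{i=1}^k a_i\,{\rm ext}(\d_\mu f_i)$ with $f_i\in\LIP_{bs}(\Y)$ and $|\omega'|\le 1$ is an $L^0$-limit of elements $\pi(\omega)$ with $|\omega|\le 1$. The tautological lift $\omega=\sum_i a_i\,\d_{\mm_\Y}f_i$ obeys $\pi(\omega)=\omega'$ but a priori only $|\omega|\ge|\omega'|$, so one must replace it on $\{|\omega|>1\}$ by something of norm $\le1$ with the same image under $\pi$. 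Off the $\mm_\Y$-essential support of $\mu$ the target $\omega'$ vanishes and the constraint is vacuous, so there $\omega$ may be taken to be $0$; this is what makes the presence of the extension functor harmless. On the support of $\mu$ one must instead realize $\omega'$ through functions whose $\mm_\Y$- and $\mu$-differentials have (almost) the same pointwise norm, and it is precisely here that reflexivity is indispensable: if $W^{1,2}(\Y,\sfd_\Y,\mu)$ is reflexive one selects, via weak compactness in $L^2_\mu(T^*\Y)$ together with convergence of the norms, approximants realizing $\omega'$ with $L^0_{\mm_\Y}$-norm at most $(1+\eps)$; if instead $W^{1,2}(\X,\sfd_\X,\mm_\X)$ is reflexive one carries out the same selection after transporting the whole problem to $\X$ via $u$ --- using Proposition \ref{prop:compo}, \eqref{eq:glip} and \eqref{eq:def_du} --- so that the weak compactness is used in $L^2(T^*\X)$ instead. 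The point to stress is that density of the image is genuinely weaker than metric surjectivity (it would only give that $(u^*\pi)^*$ is injective, not isometric, in exact parallel with a surjective non-open map between Banach spaces having injective but non-isometric adjoint), so the reflexivity assumption cannot be dispensed with; carrying out this norm-controlled lifting cleanly is, I expect, the heart of the argument.
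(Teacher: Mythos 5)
Your reduction of the statement to the inequality $|\d u(v)|\le|\hat\d u(v)|$ is correct, and your observation that this would follow if $u^*\pi$ mapped the unit ball onto an $L^0$-dense subset of the unit ball is a legitimate strategy. But the proposal has a genuine gap precisely at the point you yourself flag: the ``norm-controlled lifting'' is never carried out, and it is the entire mathematical content of the proposition. Density of the image (your easy half) is essentially free and, as you note, proves nothing about norms; everything hinges on producing lifts with pointwise norm bounded by $(1+\eps)$, and no mechanism for this is given beyond the phrase ``via weak compactness together with convergence of the norms''. Note also a technical obstruction in your intermediate reduction: passing from metric surjectivity of $\pi$ to metric surjectivity of $u^*\pi$ is not automatic, because an element of norm $\le 1$ in ${\rm Ext}(u^*L^0_\mu(T^*\Y))$ is locally of the form $\chi_A[u^*\omega']$ with $|\omega'|\circ u\le 1$ only $\mm_\X$-a.e.\ on $A$, i.e.\ $|\omega'|\le1$ only $u_*(\mm_\X\restr{A\cap\{|Du|>0\}})$-a.e.\ rather than $\mu$-a.e., so the lifting problem on $\Y$ does not localize the way your sentence suggests.

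More seriously, the framework cannot accommodate the case in which only $W^{1,2}(\X,\sfd_\X,\mm_\X)$ is reflexive: metric surjectivity of $\pi$ is a statement purely about the two measures $\mm_\Y,\mu$ on $\Y$, and reflexivity of a Sobolev space over $\X$ gives no compactness there; your parenthetical ``transport the whole problem to $\X$'' silently abandons the surjectivity scheme without saying what replaces it. The paper avoids any lifting statement altogether: it bounds the pairing of $\d u(v)$ with the generators ${\rm ext}([u^*\d_\mu f])$ directly, proving $\d g(v)\le|\d_\mu f|\circ u\,|\hat\d u(v)|$ and concluding by duality, since such generators determine $|\d u(v)|$. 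When $W^{1,2}(\X)$ is reflexive this is done by taking $(f_n)\subset\LIP_{bd}(\Y)$ as in \eqref{eq:optlip} for $(\Y,\sfd_\Y,\mu)$, observing the scalar inequality $\d(f_n\circ u)(v)=[u^*\d_{\mm_\Y}f_n](\hat\d u(v))\le\lip_a(f_n)\circ u\,|\hat\d u(v)|$, and using weak $W^{1,2}(\X)$-compactness of $(\eta f_n\circ u)$ to identify the weak limit of the left-hand side as $\d g(v)$, while \eqref{eq:perdopo} handles the right-hand side; no element of any module over $\Y$ with controlled $\mm_\Y$-norm is ever produced. When $W^{1,2}(\Y,\sfd_\Y,\mu)$ is reflexive, the paper invokes the Ambrosio--Colombo--Di Marino result giving $(f_n)\subset\LIP_{bs}(\Y)$ converging in energy in $W^{1,2}(\mu)$ with $\lip_a(f_n)\to|\d_\mu f|$ in $L^2(\mu)$; this is the closest analogue of your lifting, but note it controls $|\d_{\mm_\Y}f_n|$ only through $\lip_a(f_n)$ and only in $L^2(\mu)$, not $\mm_\Y$-a.e., so even here your uniform constraint $|\eta_k|\le1$ would require an additional truncation argument that you do not supply. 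To complete your proposal you would have to prove the lifting statement in the second case and find an entirely different argument in the first; as written, the heart of the proof is missing.
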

\begin{proof}\ \\
\noindent{\bf  $W^{1,2}(\X,\sfd_\X,\mm)$ is reflexive.} By inequality \eqref{eq:ineq_diff_conseq} and a density argument to conclude it is sufficient to show that for any $f\in L^\infty\cap \Sob^{2}(\Y,\sfd_\Y,\mu)$, $g\in \Sob^2(\X)$ as in Proposition \ref{prop:compo} and $v\in L^\infty(T\X)$ with bounded support it holds
\begin{equation}
\label{eq:toprove}
\d g(v)\leq |\d_\mu f|\circ u|\hat\d u(v)| \qquad\mm-a.e..
\end{equation}
Let us observe that \eqref{eq:ineq_diff_conseq} and the very definition of $|\d u|$ give $|\hat\d u(v)|\leq |\d u(v)|\leq |\d u||v|$ $\mm$-a.e., hence  the $\mm$-a.e.\ value of $G\circ u|\hat\d u(v)|$ is independent on the $\mu$-a.e.\ representative  of $G$, and the right hand side of \eqref{eq:toprove} is well defined $\mm$-a.e.\ (and equal to 0 $\mm$-a.e.\ on $\{|\d u|=0\}$). The trivial bound 
\[
\int |G|^2\circ u|\hat\d u(v)|^2\,\d\mm\leq \int|G|^2\circ u|\d u|^2|v|^2\,\d\mm\leq\||v|\|^2_\infty\int |G|^2\,\d u_*(|\d u|^2\mm) 
\]
shows that 
\begin{equation}
\label{eq:perdopo}
L^2(\Y,\mu)\ni G\quad\mapsto\quad G\circ u|\hat \d u(v)|\in L^2(\X,\mm)\qquad\text{ is linear and continuous}.
\end{equation}
Now fix $f,v$ as in \eqref{eq:toprove}, let $\eta\in\LIP(\X)$ be identically 1 on the support of $v$ and with bounded support, $(f_n)\subset \LIP_{bd}(\Y)$ be as in \eqref{eq:optlip} for the space $(\Y,\sfd_\Y,\mu)$ and notice that since we assumed $f$ to be bounded, up to a truncation argument we can assume the $f_n$'s to be equibounded. Thus the functions $f_n\circ u$ are equibounded as well and  taking into account the Leibniz rule we see that $\eta f_n\circ u\in W^{1,2}(\X,\sfd_\X,\mm)$ with equibounded norm. Since we assumed such space to be reflexive, up to pass to a non-relabeled subsequence we can assume that $(\eta f_n\circ u)_n$ has a $W^{1,2}$-weak limit and it is then clear that such limit is $\eta g$. Thus we have that $(\d(\eta f_n\circ u) )_n$ converges to $\d (\eta g)$ weakly in $L^2(T^*\X)$ and, by the choices of $v,\eta$, this implies that $(\d( f_n\circ u)(v))_n$ weakly converges to $\d g(v)$ in $L^2(\X)$. Now notice that 
\[
\d(f_n\circ u)(v)=[u^*\d_{\mm_\Y}f_n](\hat \d u(v))\leq|\d_{\mm_\Y}f_n|\circ u|\hat \d u(v)|\leq \lip_a(f_n)\circ u|\hat \d u(v)|.
\]
This, \eqref{eq:perdopo} and the choice of $(f_n)$ give that the rightmost side of the estimate above converges to the right hand side of \eqref{eq:toprove} in $L^2(\X)$. This concludes the argument.

\noindent{\bf  $W^{1,2}(\Y,\sfd_\Y,\mu)$ is reflexive.} According to \cite[Proposition 7.6]{ACM14} and its proof, in this case for any $f\in W^{1,2}(\Y,\sfd_\Y,\mu)$ we can find  $(f_n)\subset \LIP_{bs}(\Y)\subset W^{1,2}(\Y,\sfd_\Y,\mm_\Y)$ converging to $f$ in $W^{1,2}(\Y,\sfd_\Y,\mu)$ and such that $\lip_a(f_n)\to |\d_\mu f|$ in $L^2(\mu)$. The definitions of $\d u,\hat \d u$ give
\[
{\rm ext}[u^*\d_\mu f_n](\d u(v))\stackrel{\eqref{eq:glip}}=\d(f_n\circ u)(v)=[u^*\d_{\mm_\Y}f_n](\hat \d u(v))\leq|\hat \d u(v)|\,|\d_{\mm_\Y}f_n|\circ u\leq |\hat \d u(v)|\lip_a(f_n)\circ u
\]
and since the construction also ensures that $[u^*\d_\mu f_n]\to [u^*\d_\mu f]$ as $n\to\infty$, by passing to the limit we get that 
\[
{\rm ext}([u^*\d_\mu f])(\d u(v))\leq  |\hat \d u(v)|\,|\d_\mu f|\circ u= |\hat \d u(v)|\,|{\rm ext}([u^*\d_\mu f])|,\quad\mm-a.e..
\]
By the arbitrariness of $f\in W^{1,2}(\Y,\sfd_\Y,\mu)$, this is sufficient to conclude the proof.
\end{proof}
\subsection{The case $\X=\R^d$ and $u$ Lipschitz}\label{se:kir} In this section we assume that our source space $\X$ is $(\R^d,\sfd_{\rm Eucl},\mathcal L^d)$ and that the map $u\in \Sob^2(\R^d,\Y)$ is also Lipschitz. In this case Kirchheim proved in \cite{Kir94} that for $\mathcal L^d$-a.e.\ $x\in\R^d$ there is a seminorm ${\rm md}(u,x)$ on $\R^d$ such that:
\[
\text{for $\mathcal L^d$-a.e.\ $x$ we have}\qquad{\rm md}(u,x)(v)=\lim_{t\downarrow 0}\frac{\sfd_\Y\big(u(x+tv),u(x)\big)}{t}\qquad\text{ for every }v\in\R^d,
\]
where it is part of the claim the fact that the limit in the right hand side exists for $\mathcal L^d$-a.e.\ $x$.

We now show that such concept is fully compatible with  the notion of differential we introduced:
\begin{theorem}\label{kirchheim} Let $u:\R^d\to \Y$ be a Lipschitz map that is also in ${\rm S^2}(\R^d,\Y)$ and  $v\in\R^d\sim T\R^d$. Denote by $\bar v\in L^0(T\R^d)$ the vector field constantly equal to $v$. Then
\begin{equation}\label{eq:equality_with_md}
\big|\d u(\bar v)\big|(x)={\rm md}(u,x)(v)\quad\text{ for }\mathcal L^d\text{-a.e.\ }x\in\R^d.
\end{equation}
\end{theorem}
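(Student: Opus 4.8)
The plan is to compare the two seminorms $v\mapsto |\d u(\bar v)|(x)$ and $v\mapsto {\rm md}(u,x)(v)$ by testing both against a well-chosen family of Lipschitz functions on $\Y$, exploiting that both are determined by post-composition with such functions. For the inequality $|\d u(\bar v)|\leq {\rm md}(u,x)(v)$, I would argue as follows: since $\R^d$ is separable and the image of $u$ is separable, fix a countable dense sequence $(y_n)\subset\Y$ and consider the $1$-Lipschitz distance functions $\varphi_n:=\sfd_\Y(\cdot,y_n)$. For each $n$ we have $\varphi_n\circ u\in\Sob^2(\R^d)$ and, by the chain rule for Sobolev functions on $\R^d$ together with the classical fact that $|\d(\varphi_n\circ u)|$ coincides $\mathcal L^d$-a.e.\ with the modulus of the classical gradient, one checks that for a.e.\ $x$
\[
\d(\varphi_n\circ u)(\bar v)(x)=\limsup_{t\downarrow 0}\frac{(\varphi_n\circ u)(x+tv)-(\varphi_n\circ u)(x)}{t}\le {\rm md}(u,x)(v),
\]
the last step because $|\varphi_n\circ u(x+tv)-\varphi_n\circ u(x)|\le \sfd_\Y(u(x+tv),u(x))$ and the right-hand side divided by $t$ converges to ${\rm md}(u,x)(v)$ for a.e.\ $x$ by Kirchheim's theorem. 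Running this through the defining identity \eqref{eq:def_du} of $\d u$, together with \eqref{eq:glip} and the fact that $|{\rm ext}([u^*\d_\mu\varphi_n])|=|\d_\mu\varphi_n|\circ u\le 1$, and then taking the supremum over the countably many $n$ (which is enough to saturate the pointwise norm of $\d u(\bar v)$ by the analogue of the computation proving $\geq$ in \eqref{eq:samenorm}), yields $|\d u(\bar v)|(x)\le {\rm md}(u,x)(v)$ for a.e.\ $x$.

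For the reverse inequality I would use that the distance functions $\varphi_n$ can be chosen so as to recover ${\rm md}(u,x)$ exactly: Kirchheim's construction shows that for a.e.\ $x$ one has ${\rm md}(u,x)(v)=\sup_n |\d(\varphi_n\circ u)(\bar v)(x)|$ (by density of $(y_n)$, approximating $u(x)$ from nearby one gets $\sfd_\Y(u(x+tv),u(x))=\sup_n |\varphi_n(u(x+tv))-\varphi_n(u(x))|$ up to an error controlled by $\dist(u(x),\{y_n\})$, which can be made $0$ along a subsequence). For each such $n$, since $\varphi_n$ is $1$-Lipschitz and $\mu(\Y)<\infty$ we have $\varphi_n\in\Sob^2(\Y,\sfd_\Y,\mu)$, and \eqref{eq:def_du} together with \eqref{eq:glip} gives
\[
|\d(\varphi_n\circ u)(\bar v)|=\big|{\rm ext}([u^*\d_\mu\varphi_n])(\d u(\bar v))\big|\le |{\rm ext}([u^*\d_\mu\varphi_n])|\,|\d u(\bar v)|\le |\d u(\bar v)|.
\]
Taking the supremum over $n$ gives ${\rm md}(u,x)(v)\le |\d u(\bar v)|(x)$ for a.e.\ $x$, completing the proof.

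I expect the main obstacle to be the bookkeeping around null sets and the countable approximation: all the identities above hold only a.e.\ in $x$, and for a fixed $v$ one must combine countably many such statements, but one also wants \eqref{eq:equality_with_md} for a.e.\ $x$ with $v$ ranging over a fixed countable dense set and then extend by continuity of both seminorms in $v$ — this requires a uniform (in $v$) control, which the global Lipschitz bound on $u$ provides. A secondary technical point is justifying that the pointwise norm $|\d u(\bar v)|(x)$, which is defined via the abstract dual module ${\rm Ext}((u^*L^0_\mu(T^*\Y))^*)$, is genuinely computed as the supremum of $|{\rm ext}([u^*\d_\mu\varphi_n])(\d u(\bar v))|$ over the countable family $\varphi_n$; this is where one invokes that $\{[u^*\d_\mu f]\}$ generates the pullback module and a Banach–Alaoglu/separability argument in the spirit of \cite[Corollary 1.2.16]{Gigli14} — made available here by the separability of $L^0_\mu(T\R)$-type arguments, or more directly by the fact that the distance functions $\varphi_n$ generate $L^0_\mu(T^*\Y)$ when $(y_n)$ is dense. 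Once these measure-theoretic normalizations are in place, the two inequalities above are essentially immediate from the definition of $\d u$ and Kirchheim's theorem.
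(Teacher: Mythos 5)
There is a genuine gap, and it sits exactly at the hard point of the theorem. In your first direction ($|\d u(\bar v)|\le {\rm md}(u,\cdot)(v)$) you test $\d u(\bar v)$ only against the countable family of distance functions $\varphi_n=\sfd_\Y(\cdot,y_n)$ and claim that $\sup_n\big|{\rm ext}([u^*\d_\mu\varphi_n])(\d u(\bar v))\big|$ saturates the pointwise dual norm $|\d u(\bar v)|$. This is unjustified: the dual norm is an essential supremum over the whole predual module, which is generated by $\{[u^*\d_\mu f]:f\in\Sob^2(\Y,\sfd_\Y,\mu)\}$ for \emph{arbitrary} Sobolev $f$, and it is neither claimed in the paper nor known in this generality that the differentials of distance functions generate $L^0_\mu(T^*\Y)$ (the ``analogue of the computation proving $\geq$ in \eqref{eq:samenorm}'' does not help: that computation bounds $|Du|$ by $|\d u|$ using the minimality in Definition \ref{def:sobmap}, which quantifies over all Lipschitz $f$, not over distance functions). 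The paper instead proves, for every $f\in\Sob^2(\Y,\sfd_\Y,\mu)$ and $g$ as in Proposition \ref{prop:compo}, the bound $\d g(\bar v)\le |\d_\mu f|\circ u\,{\rm md}(u,\cdot)(v)$ (its \eqref{eq:toprove2}); the nontrivial step is passing from the approximating Lipschitz functions $f_n$ of Proposition \ref{prop:compo} — for which \eqref{eq:weakcomp} gives $|\d(f_n\circ u)(\bar v)|\le \lip_a(f_n)\circ u\,{\rm md}(u,\cdot)(v)$ — to the limit $f$, which is done by an $L^2$-continuity estimate for $G\mapsto G\circ u\,{\rm md}(u,\cdot)(v)$ together with weak $L^2$ convergence of $\d(f_n\circ u)(\bar v)$ to $\d g(\bar v)$, obtained via the reflexivity argument of Proposition \ref{thm:equal_diff}. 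None of this is replaceable by the distance-function shortcut, so your ``secondary technical point'' is in fact the missing core of the proof.

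A smaller but still real problem is your justification of ${\rm md}(u,x)(v)=\sup_n|\d(\varphi_n\circ u)(\bar v)(x)|$ in the other direction. From density of $(y_n)$ one only gets $\sfd_\Y(u(x+tv),u(x))=\sup_n|\varphi_n(u(x+tv))-\varphi_n(u(x))|$, and after dividing by $t$ the exchange of $\lim_{t\downarrow0}$ and $\sup_n$ goes the wrong way; the error term $\dist(u(x),\{y_n\})$ is a fixed positive constant once $n$ is fixed, so after division by $t\to 0$ it does not vanish ``along a subsequence''. The inequality ${\rm md}(u,x)(v)\le\sup_n\d(\varphi_n\circ u)(\bar v)(x)$ is true a.e., but it needs the one-dimensional representation of the metric speed along the curves $t\mapsto u(x+tv)$, namely $|\dot\gamma^x_t|=\sup_n\partial_t(\varphi_n\circ\gamma^x)_t$ for a.e.\ $t$ (\cite[Theorem 1.1.2]{AmbrosioGigliSavare08}), combined with \eqref{eq:weakcomp} and Fubini's theorem, which is how the paper argues. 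With that lemma your $\geq$ direction matches the paper's; without it, the pointwise approximation you sketch does not close.
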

\begin{proof}\ \\
$\boxed{\geq}$ Let $(y_n)_n$ be countable and dense in $u(\R^d)\subset \Y$ and, for any $n\in\N$, put $f_n(\cdot):=\sfd_\Y(\cdot,y_n)$. From the compatibility of the abstract differential with the classical distributional notion in the case $\X=\R^d$ (see \cite[Remark 2.2.4]{Gigli17}) and Rademacher's theorem we see that
\begin{equation}
\label{eq:weakcomp}
\d(f_n\circ u)(\bar v)=\lim_{h\to 0}\frac{f_n\circ u(\cdot+hv)-f_n\circ u(\cdot)}{h}\qquad\mathcal L^d-a.e..
\end{equation}
For $x\in \R^d$ let $\gamma^x:[0,1]\to \Y$ be the Lipschitz curve defined by $\gamma^x_t:=u(x+tv)$ and  put $g^x_{n,t}:=f_n\circ\gamma_t^x$. By \cite[Theorem 1.1.2]{AmbrosioGigliSavare08} and its proof we know that for  the metric speed $|\dot\gamma^x_t|$ it holds $|\dot\gamma^x_t|=\sup_n\partial_t g^x_{n,t}$ for every $x\in\R^d$ and a.e.\ $t$, so that taking \eqref{eq:weakcomp} into account we obtain
\[
{\rm md}(u,x+tv)(v)=|\dot\gamma^x_t|=\sup_n\partial_t g^x_{n,t}=\sup_n\d(f_n\circ u)(\bar v)(x+tv)\qquad\mathcal L^d-a.e.\ x,\ a.e.\ t.
\] 
Hence Fubini's theorem yields
\[
{\rm md}(u,\cdot)(v)=\sup_n\d(f_n\circ u)(\bar v)\stackrel{\eqref{eq:glip}}=\sup_n{\rm ext}([u^*\d_\mu f_n])(\d u(\bar v))\leq |\d u(\bar v)|\qquad\mathcal L^d-a.e.,
\]
having used the trivial bound $|\d_\mu f_n|\leq 1$ $\mu$-a.e.\ in the last step.

\noindent$\boxed{\leq}$ Let  $f\in   \Sob^{2}(\Y,\sfd_\Y,\mu)$ be arbitrary and $g\in \Sob^2(\X)$ as in Proposition \ref{prop:compo}. We will show that 
\begin{equation}
\label{eq:toprove2}
\d g(v)\leq |\d_\mu f|\circ u\,{\rm md}(u,\cdot)(v) \qquad\mathcal L^d-a.e.,
\end{equation}
which is sufficient to conclude. The bound $\geq$ in \eqref{eq:equality_with_md} that we already proved and the same arguments used in studying \eqref{eq:toprove} show that the right hand side of \eqref{eq:toprove2} is well defined $\mathcal L^d$-a.e.\ and that 
\begin{equation}
\label{eq:contl2}
L^2(\Y,\mu)\ni G\quad\mapsto\quad G\circ u\,{\rm md}(u,\cdot)(v)\in L^2(\R^d)\qquad\text{ is linear and continuous}.
\end{equation}
Now let $(f_n)\subset\LIP_{bd}(\Y)$ be as in Proposition \ref{prop:compo} and notice that for every $n\in\N$ the identity  \eqref{eq:weakcomp} yields, for $\mathcal L^d$-a.e.\ $x$:
\[
|\d (f_n\circ u)(\bar v)|(x)\leq \lip_a(f_n)(u(x))\lim_{h\to 0}\frac{\sfd_\Y\big(u(x+hv),u(x)\big)}{|h|}=\big(\lip_a(f_n)\circ u\big)(x)\,{\rm md}(u,x)(v).
\]
By \eqref{eq:contl2} and the choice of $(f_n)$ we see that the rightmost side of the above converges to the right hand side of \eqref{eq:toprove2} in $L^2(\R^d)$ and again following the arguments in the first part of the proof of Proposition \ref{thm:equal_diff} (applicable, as $W^{1,2}(\R^d)$ is certainly reflexive) we see that $\big(\d (f_n\circ u)(\bar v)\big)_n$ converges to $\d g(v)$ weakly in $L^2(\R^d)$. Hence \eqref{eq:toprove2} is obtained.
\end{proof}

\section{Differential of locally Sobolev maps between metric spaces}\label{s:local_Sob}
\subsection{Inverse limits of modules} Here we briefly discuss properties of inverse limits in the category of $L^0(\mm)$-normed modules, where morphisms are $L^0(\mm)$-linear contractions, i.e.\  maps  $T:\mathscr M\to\mathscr N$ such that $|T(v)|\leq |v|$ $\mm$-a.e.. 
We start with:
\begin{proposition}\label{prop:invlim}
Let $(\{\mathscr M_i\}_{i\in I},\{P_j^i\}_{i\leq j\in I})$ be an inverse system of $L^0(\mm)$-normed modules. Then there exists the inverse limit $(\mathscr M,\{P^i\}_{i\in I})$. For every family $I\ni i\mapsto v^i\in \mathscr M_i$ such that
\begin{equation}
\label{eq:compvi}
P_j^i(v^j)=v^i\qquad\text{and}\qquad \esssup_{i\in I} |v^i|\in L^0(\mm)
\end{equation}
there is a unique $v\in\mathscr M$ such that $v^i=P^i(v)$ for every $i\in I$ and it satisfies $|v|= \esssup_i |v^i|$.
\end{proposition}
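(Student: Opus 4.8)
The plan is to realize the inverse limit concretely as the $L^0(\mm)$-module of \emph{bounded coherent threads}: set
\[
\mathscr M:=\Big\{(v^i)_{i\in I}\ :\ v^i\in\mathscr M_i\ \forall i,\quad P^i_j(v^j)=v^i\ \text{ for }i\leq j,\quad \esssup_{i\in I}|v^i|\in L^0(\mm)\Big\},
\]
with componentwise module operations and pointwise norm $\big|(v^i)_i\big|:=\esssup_{i\in I}|v^i|$. First I would check that $\mathscr M$ is stable under these operations — coherence is preserved since each $P^i_j$ is $L^0(\mm)$-linear, and the finiteness condition is preserved because $\esssup$ is subadditive and $\esssup_i|f\,v^i|=|f|\,\esssup_i|v^i|$ (as $|f|$ is finite $\mm$-a.e.) — and that the three axioms of a pointwise norm hold; here the useful elementary observation is that $|v^i|\leq|v^j|$ whenever $i\leq j$, because $P^i_j$ is a contraction, so the family $(|v^i|)_i$ is monotone.

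The point requiring real care, and the one I expect to be the main obstacle, is completeness of $\mathscr M$ with respect to the $L^0$-distance. Given a Cauchy sequence $(v_n)_n=\big((v^i_n)_i\big)_n$, for each fixed $i$ the bound $|v^i_n-v^i_m|\leq|v_n-v_m|$ makes $(v^i_n)_n$ Cauchy in $\mathscr M_i$, hence convergent to some $v^i\in\mathscr M_i$, and continuity of $P^i_j$ gives $P^i_j(v^j)=v^i$, so $(v^i)_i$ is a coherent thread. Establishing that this thread is \emph{bounded}, uniformly in $i$, is the delicate part: passing to a subsequence with $\mathsf d(v_{n_k},v_{n_{k+1}})\leq 2^{-k}$, the standard $L^0$-argument (Tonelli) yields $S:=\sum_k|v_{n_{k+1}}-v_{n_k}|\in L^0(\mm)$, and then for every $i$ one gets $|v^i-v^i_{n_k}|\leq\sum_{l\geq k}|v^i_{n_{l+1}}-v^i_{n_l}|\leq S$, whence $|v^i|\leq|v_{n_1}|+S$ with a bound independent of $i$, so $\esssup_i|v^i|\in L^0(\mm)$ and $(v^i)_i\in\mathscr M$; the same chain of inequalities gives $\big|(v^i)_i-v_{n_k}\big|\leq\sum_{l\geq k}|v_{n_{l+1}}-v_{n_l}|\to 0$ $\mm$-a.e., hence $v_{n_k}\to(v^i)_i$ in the $L^0$-distance, and a Cauchy sequence with a convergent subsequence converges.

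Finally I would check the universal property with the projections $P^i:\mathscr M\to\mathscr M_i$, $(v^j)_j\mapsto v^i$: these are $L^0(\mm)$-linear contractions with $P^i_j\circ P^j=P^i$, and given any $L^0(\mm)$-normed module $\mathscr N$ with contractions $Q^i:\mathscr N\to\mathscr M_i$ satisfying $P^i_j\circ Q^j=Q^i$, the map $w\mapsto\big(Q^i(w)\big)_i$ lands in $\mathscr M$ \emph{precisely because} the $Q^i$ are contractions (so $\esssup_i|Q^i(w)|\leq|w|\in L^0(\mm)$), it is a contraction, and it is the unique morphism $Q$ with $P^i\circ Q=Q^i$ for all $i$ since the $P^i$ jointly separate points of $\mathscr M$. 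This exhibits $(\mathscr M,\{P^i\}_i)$ as the inverse limit. The concluding assertion of the proposition is then immediate from the construction: a family $i\mapsto v^i$ satisfying \eqref{eq:compvi} is, by definition, an element $v\in\mathscr M$ with $P^i(v)=v^i$ for all $i$ and $|v|=\esssup_i|v^i|$; and if $v,v'\in\mathscr M$ both have $P^i(v)=P^i(v')=v^i$ for every $i$, then $|v-v'|=\esssup_i|P^i(v-v')|=0$, so $v=v'$.
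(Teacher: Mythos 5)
Your proposal is correct and takes essentially the same route as the paper: there too the inverse limit is realized as the module of coherent threads $(v^i)_i$ with $\esssup_i|v^i|\in L^0(\mm)$, equipped with this essential supremum as pointwise norm, and completeness is the only delicate point, the universal property and the final assertion being immediate from the construction. The only cosmetic differences are that the paper produces the threads via the algebraic inverse limit (citing Lang) before imposing the finiteness of $\esssup_i|v^i|$, and its completeness argument passes to the $L^0(\mm)$-limit of the pointwise norms $|v_n|$ rather than using your fast-subsequence/summable-series device.
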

\begin{proof}
The system $(\{\mathscr M_i\}_{i\in I},\{P_j^i\}_{i\leq j\in I})$  is also an inverse system in the category of algebraic modules over the ring $L^0(\mm)$ in the sense of \cite[Chapter III.\S10]{lang84}. Hence according to \cite[Chapter III, Theorem 10.2]{lang84} and its proof there exists the algebraic inverse limit $(\mathscr M_{\rm Alg},P_{\rm Alg}^i)$ and for every family $i\mapsto v^i\in\mathscr M_i$ there is a unique $v\in \mathscr M_{\rm Alg}$ such that $P_{\rm Alg}^i(v)=v^i$ for every $i\in I$. Now define $|v|$ for any
$v\in\mathscr M_{\rm Alg}$ as
\begin{equation}
\label{eq:defnormlim}
|v|:=\esssup_{i\in I}|P_{\rm Alg}^i(v)|
\end{equation}
so that $|v|:\X\to[0,+\infty]$ is the equivalence class of a Borel map up to $\mm$-a.e.\ equality, and put
\[
\mathscr M:=\big\{v\in\mathscr M_{\rm Alg}\, :\, |v|\in L^0(\mm)\big \}=\big\{v\in \mathscr M_{\rm Alg}\, :\, |v|<+\infty\ \mm-a.e.\big\},\qquad\qquad P^i:=P_{\rm Alg}^i\restr{\mathscr M}.
\]
We claim that $(\mathscr M,P^i)$ is the desired inverse limit. Start by noticing that \eqref{eq:defnormlim} ensures that $|P^i(v)|\leq |v|$ $\mm$-a.e.,\ i.e.\ the $P^i$'s are contractions, as required. Let us now check that  $\mathscr M$ is a $L^0(\mm)$-normed module: the only non-trivial thing to verify is that it is complete, i.e.\ that  if $(v^n)$ is Cauchy in $\mathscr M$, then it has a limit. Since the $P^i$'s are contractions, we see that $n\mapsto P^i(v_n)$ is Cauchy in $\mathscr M_i$ and thus has a limit $v^i$ for every $i\in I$. Passing to the limit in the identity $P^{i}(v^n)=P_{j}^{i}(P^{j}(v^n))$ valid for every $i\leq j$  and using the continuity of $P_j^i$ we deduce that $v^i=P_j^i(v^j)$, i.e.\ there is $v=(v^i)_{i\in I}\in \mathscr M_{\rm Alg}$. Since $(v_n)$ is Cauchy and, trivially, the pointwise norm in $\mathscr M$ satisfies the triangle inequality, we see that $(|v_n|)$ has a limit $f$ in $L^0(\mm)$. Then from the bound $|v^i|=\lim_n|P^i(v_n)|\leq \lim_n|v_n|=:f$ valid for every $i\in I$ we deduce $|v|\leq f$ and thus $v\in \mathscr M$. Similarly, from $|v^i-P^i(v_n)|=\lim_m |P^i(v_m)-P^i(v_n)|\leq \lim_m|v_m-v_n|$ we deduce $|v-v_n|\leq \lim_m|v_m-v_n|$ and passing to the $L^0(\mm)$-limit in $n$ and using that $(v_n)$ is $\mathscr M$-Cauchy we conclude that $v_n\to v$ in $\mathscr M$, thus proving completeness.

Now the fact that for $v^i$'s as in \eqref{eq:compvi} there is a unique $v\in\mathscr M$ projecting on them is a trivial consequence of the construction and from this fact the universality  
property of $(\mathscr M,P^i)$  follows.
\end{proof}
It is now easy to check that there exists the inverse limit of a compatible family of maps:
\begin{proposition}\label{prop:limitmap}
Let   $(\{\mathscr M^i\}_{i\in I},\{P^i_j\}_{i\leq j\in I})$ and $(\{\mathscr N^i\}_{i\in I},\{Q^i_j\}_{i\leq j\in I})$ be two inverse systems of $L^0(\mm)$-normed modules and $(\mathscr M,P^i),(\mathscr N, Q^i)$ their inverse limits. Also, for every $i\in I$ let $T^i:\mathscr M^i\to \mathscr N^i$ be $L^0(\mm)$-linear and continuous and such that 
\begin{equation}
\label{eq:comppq}
T^i\circ P_j^i=Q_j^i\circ T^j\qquad\forall i\leq j\in I
\end{equation}
and so that for some $\ell\in L^0(\mm)$ we have
\begin{equation}
\label{eq:unifbound}
|T^i(v^i)|\leq \ell |v^i|\quad\mm-a.e.\qquad \forall i\in I,\ v^i\in\mathscr M^i.
\end{equation}
Then there exists a unique $L^0(\mm)$-linear and continuous map $T:\mathscr M\to \mathscr N$ such that $Q^i\circ T=T^i\circ P^i$ for every $i\in I$ and it satisfies $|T(v)|\leq \ell |v|$ $\mm$-a.e.\ for every $v\in\mathscr M$.
\end{proposition}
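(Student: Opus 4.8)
The plan is to define $T$ one element at a time, using the universal property of the inverse limit $(\mathscr N,Q^i)$ furnished by Proposition \ref{prop:invlim}. Given $v\in\mathscr M$, set $w^i:=T^i(P^i(v))\in\mathscr N^i$ for each $i\in I$. First I would check that the family $(w^i)_{i\in I}$ satisfies the compatibility condition \eqref{eq:compvi}. The relation $Q_j^i(w^j)=w^i$ is a direct computation from \eqref{eq:comppq} together with the defining identity $P_j^i\circ P^j=P^i$ of the inverse limit: indeed $Q_j^i(T^j(P^j(v)))=T^i(P_j^i(P^j(v)))=T^i(P^i(v))=w^i$. For the essential-supremum bound I would combine the uniform estimate \eqref{eq:unifbound} with the fact, part of Proposition \ref{prop:invlim}, that each $P^i$ is a contraction: $|w^i|=|T^i(P^i(v))|\leq\ell\,|P^i(v)|\leq\ell\,|v|$ $\mm$-a.e.\ for every $i$, so $\esssup_{i\in I}|w^i|\leq\ell\,|v|\in L^0(\mm)$.

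With \eqref{eq:compvi} verified, Proposition \ref{prop:invlim} yields a unique $w\in\mathscr N$ with $Q^i(w)=w^i$ for all $i$, and I would set $T(v):=w$. By the same proposition $|T(v)|=\esssup_{i}|w^i|\leq\ell\,|v|$ $\mm$-a.e., which is the asserted norm bound; continuity of $T$ with respect to the $L^0$-topology then follows at once, since if $v_n\to v$ in $\mathscr M$ then $|T(v_n)-T(v)|=|T(v_n-v)|\leq\ell\,|v_n-v|\to 0$ in $L^0(\mm)$, using that multiplication by the fixed function $\ell$ is continuous on $L^0(\mm)$.

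The remaining assertions are formal consequences of the uniqueness clause in Proposition \ref{prop:invlim}. For $L^0(\mm)$-linearity, given $v_1,v_2\in\mathscr M$ and $h\in L^0(\mm)$, I would note that $T(v_1)+T(v_2)$ and $T(v_1+v_2)$ both have $i$-th projection equal to $T^i(P^i(v_1))+T^i(P^i(v_2))=T^i(P^i(v_1+v_2))$ by linearity of $Q^i$, $T^i$ and $P^i$, hence they coincide by uniqueness; the identity $T(hv)=h\,T(v)$ is checked identically. Uniqueness of $T$ itself is the same argument: any $L^0(\mm)$-linear continuous $T'$ with $Q^i\circ T'=T^i\circ P^i$ satisfies $Q^i(T'(v))=T^i(P^i(v))=Q^i(T(v))$ for every $i$, so $T'(v)=T(v)$ by the uniqueness part of Proposition \ref{prop:invlim}. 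I do not expect any genuine obstacle here — the whole argument is a diagram chase once Proposition \ref{prop:invlim} is available; the one step deserving care is the verification of the essential-supremum condition in \eqref{eq:compvi}, which is precisely what the \emph{uniform} bound hypothesis \eqref{eq:unifbound} is there to guarantee.
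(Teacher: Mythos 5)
Your argument is correct and is essentially the same as the paper's: define $T(v)$ componentwise via $w^i:=T^i(P^i(v))$, verify compatibility from \eqref{eq:comppq} and the bound $|w^i|\leq\ell|v|$ from \eqref{eq:unifbound}, then invoke Proposition \ref{prop:invlim} to get the element $T(v)$ together with the norm estimate, with linearity, continuity and uniqueness following formally. The only difference is that you spell out details (the contraction property of the $P^i$'s, continuity, uniqueness via projections) that the paper leaves implicit.
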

\begin{proof}
Let $v\in\mathscr M$, put $w^i:=T^i(P^i(v))\in \mathscr N^i$ and notice that \eqref{eq:comppq} yields $Q_j^i(w^j)=w^i$ and \eqref{eq:unifbound} that $|w^i|\leq \ell |v|$ $\mm$-a.e.\ for every $i\leq j\in I$. Thus Proposition \ref{prop:invlim} above ensures that there is a unique $T(v)\in\mathscr N$ such that $Q^i(T(v))=w^i$ for every $i\in I$ and it satisfies $|T(v)|\leq \ell |v|$ $\mm$-a.e.. Since the assignment $v\mapsto T(v)$ is trivially $L^0(\mm)$-linear, the proof is completed.
\end{proof}

\subsection{Locally Sobolev maps and their differential}

In this section we come back to the case of general $(\X,\sfd_\X,\mm)$, $(\Y,\sfd_\Y)$ as in Section \ref{s:diff_Sob_map} and study the case of $u\in \Sob^2_{\rm loc}(\X,\Y)$, this being the collection of functions $u$ such that  every $x\in\X$ has a neighbourhood $U_x$ such that $u$ coincides with some $u_x\in\Sob^2(\X,\Y)$ $\mm$-a.e.\ in $U_x$. Then for $u\in  \Sob^2_{\rm loc}(\X,\Y)$ the locality of the differential ensures that the formula
\[
|D u|:=|D u_x|\quad\mm-a.e.\ on\ U_x\qquad\forall x\in\X
\]
gives a well-defined function $|Du|\in L^2_{\rm loc}(\X)$. Here $L^2_{\rm loc}(\X)$  denotes the space of locally square-integrable functions on $\X$.

For this kind of $u$ the measure $u_*(|D u|^2\mm)$ is in general not $\sigma$-finite any longer. Hence, to define the differential $\d u$ we need to suitably adapt the definition previously given. This is the scope of the current section.

\bigskip

Fix $u\in \Sob^2_{\rm loc}(\X,\Y)$. By $\F(u)$ we denote the collection of open sets $\Omega\subset\X$ such that $\int_\Omega |Du|^2\,\d\mm<\infty$. Since  $u\in{\rm S}_{\rm loc}^2(\X,\Y)$ we see that $\F(u)$ is a cover of $\X$. We shall now build two inverse limits of $L^0(\mm)$-normed modules indexed over $\F(u)$, directed by inclusion. For the first define, for $\Omega\in \F(u)$, the measure $\mu_\Omega$ on $\Y$ as
\[
\mu_\Omega:=u_\ast(|Du|^2\mm\restr\Omega).
\]
Thus $\mu_\Omega$ is Radon and we can consider the cotangent module $L^0_{\mu_\Omega}(T^*\Y)$ of $(\Y,\sfd_\Y,\mu_\Omega)$ and its pullback $u^*L^0_{\mu_\Omega}(T^*\Y)$ which is a $L^0(\mm\restr{\Omega\cap\{|Du|>0\}})$-normed module. Then  put   $u^*L^0_{\Omega}(T^*\Y):={\rm Ext}(u^*L^0_{\mu_\Omega}(T^*\Y))$, which is $L^0(\mm)$-normed. Observe that for $\Omega'\subset\Omega\in\F(u)$ we have $\mu_{\Omega'}\leq\mu_\Omega$ and thus Lemma \ref{le:changemeas} provides a canonical `projection' map $P_\Omega^{\Omega'}:L^0_{\mu_\Omega}(T^*\Y)\to {\rm Ext}(L^0_{\mu_{\Omega'}}(T^*\Y))$. Then we can consider the (extended)  pullback  map $u^*P_\Omega^{\Omega'}:u^*L^0_{\Omega}(T^*\Y)\to u^*L^0_{\Omega'}(T^*\Y)$ and notice that  since $P_{\Omega_2}^{\Omega_1}\circ P_{\Omega_3}^{\Omega_2}=P_{\Omega_3}^{\Omega_1}$ for every $\Omega_3\subset\Omega_2\subset\Omega_1\in \F(u)$, the functoriality of the pullback grants that $(u^*L^0_{\Omega}(T^*\Y),u^*P_{\Omega}^{\Omega'})$ is an inverse system of $L^0(\mm)$-normed modules. We then call $(u^*L^0_u(T^*\Y), P^\Omega)$ its inverse limit (recall Proposition \ref{prop:invlim}).
\begin{remark}{\rm
For every $f:\Y\to\R$ Lipschitz with bounded support we have $f\in\Sob^2(\Y,\sfd_\Y,\mu)$ and $|\d_\mu f|\leq \Lip(f)$ $\mu$-a.e.\ for every finite Radon measure $\mu$. Hence there is an element $\omega\in u^*L^0_u(T^*\Y)$ such that $P^\Omega(\omega)={\rm ext}([u^*\d_{\mu_\Omega} f])$ for every $\Omega\in \F(u)$.  
}\fr\end{remark}

For the second consider, given $\Omega\subset\X$ open, the $L^0(\mm\restr\Omega)$-normed module $L^0_{\mm\restr\Omega}(T^*\X)$ and its extension $L^0_\Omega(T^*\X):={\rm Ext}(L^0_{\mm\restr\Omega}(T^*\X))$ which is $L^0(\mm)$-normed. Since trivially for $\Omega'\subset\Omega$ we have $\mm\restr{\Omega'}\leq\mm\restr\Omega$, Lemma \ref{le:changemeas} grants the existence of canonical (extended) `projection' maps $Q_\Omega^{\Omega'}:L^0_\Omega(T^*\X)\to L^0_{\Omega'}(T^*\X)$ and by construction it is clear that $\big(\big\{L^0_{\Omega}(T^*\X)\big\}_{\Omega\in\F(u)},\{Q_{\Omega}^{\Omega'}\}_{\Omega'\subset\Omega}\big)$ is an inverse system of $L^0(\mm)$-normed modules. We then have the following non-obvious result:
\begin{lemma}\label{le:limitcot}
The inverse limit of $\big(\big\{L^0_{\Omega}(T^*\X)\big\}_{\Omega\in\F(u)},\{Q_{\Omega}^{\Omega'}\}_{\Omega'\subset\Omega}\big)$  is $\big(L^0(T^*\X),\{Q_{\X}^{\Omega}\}_{\Omega'\subset\Omega}\big)$. 
\end{lemma}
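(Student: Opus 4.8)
I would deduce the lemma from the universal property of the abstract inverse limit constructed in Proposition \ref{prop:invlim}. First, $(L^0(T^*\X),\{Q_\X^\Omega\}_{\Omega\in\F(u)})$ is a cone over the system: for $\Omega'\subset\Omega$ in $\F(u)$ the maps $Q_\Omega^{\Omega'}\circ Q_\X^\Omega$ and $Q_\X^{\Omega'}$ are both $L^0(\mm)$-linear and continuous and both send $\d f\mapsto{\rm ext}(\d_{\mm\restr{\Omega'}}f)$ for every $f\in\Sob^2(\X)$, so they coincide because such differentials generate $L^0(T^*\X)$. Calling $(\mathscr M,\{P^\Omega\})$ the inverse limit of Proposition \ref{prop:invlim}, there is then a unique $L^0(\mm)$-linear contraction $\Phi:L^0(T^*\X)\to\mathscr M$ with $P^\Omega\circ\Phi=Q_\X^\Omega$ for all $\Omega$, and it suffices to prove that $\Phi$ is an isomorphism.

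The crucial input is the \emph{localization of the cotangent module to open sets}: for $\Omega\subset\X$ open one has $|\d_{\mm\restr\Omega}f|=|\d_\mm f|$ $\mm$-a.e.\ on $\Omega$ for every $f\in\Sob^2(\X)$, which is the locality of minimal weak upper gradients with respect to open sets (see \cite{Gigli14}). Combined with the fact that $\LIP_{bs}(\X)$ is contained in $\Sob^2(\X)$ and generates the cotangent module, and applying Lemma \ref{le:changemeas} with $\mu_1=\mm\restr\Omega\leq\mu_2=\mm$, this upgrades to a canonical isometric isomorphism $L^0_{\mm\restr\Omega}(T^*\X)\cong L^0(T^*\X)\restr\Omega$ under which the map of Lemma \ref{le:changemeas} is the restriction $\omega\mapsto{\rm ext}(\omega\restr\Omega)$. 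Under this identification $L^0_\Omega(T^*\X)={\rm Ext}(L^0(T^*\X)\restr\Omega)$, the map $Q_\X^\Omega$ is $\omega\mapsto{\rm ext}(\omega\restr\Omega)$, and $Q_\Omega^{\Omega'}$ is the further restriction, so the whole statement reduces to the sheaf/gluing property of the single module $L^0(T^*\X)$ relative to the open cover $\F(u)$ of the (Lindelöf) space $\X$.

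For injectivity I would compute via Proposition \ref{prop:invlim} that $|\Phi(\omega)|=\esssup_{\Omega\in\F(u)}|Q_\X^\Omega(\omega)|=\esssup_{\Omega\in\F(u)}\nchi_\Omega|\omega|=|\omega|$ $\mm$-a.e., the last step because $\F(u)$ covers $\X$; so $\Phi$ is an isometry. For surjectivity, take $w\in\mathscr M$, i.e.\ a compatible family $(\omega^\Omega)_{\Omega\in\F(u)}$ with $\omega^\Omega\in L^0(T^*\X)\restr\Omega$; fix a countable subcover $\{\Omega_n\}_n\subset\F(u)$, disjointify it into the Borel partition $E_n:=\Omega_n\setminus(\Omega_1\cup\dots\cup\Omega_{n-1})$, and invoke the gluing property of the $L^0(\mm)$-normed module $L^0(T^*\X)$ to obtain the unique $\omega\in L^0(T^*\X)$ with $\nchi_{E_n}\omega=\nchi_{E_n}\omega^{\Omega_n}$ for every $n$ (the required bound being $\esssup_n\nchi_{E_n}|\omega^{\Omega_n}|\leq|w|\in L^0(\mm)$). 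A short check using the compatibility of the family and the locality in $L^0(T^*\X)\restr\Omega$ then yields $\omega\restr\Omega=\omega^\Omega$ for \emph{every} $\Omega\in\F(u)$, not only the $\Omega_n$ (one checks $\omega\restr\Omega$ and $\omega^\Omega$ agree on each $\Omega\cap\Omega_n$ and uses that $\{\Omega\cap\Omega_n\}_n$ covers $\Omega$). Hence $P^\Omega(\Phi(\omega))=Q_\X^\Omega(\omega)=\omega^\Omega=P^\Omega(w)$ for all $\Omega$, so $\Phi(\omega)=w$ by the uniqueness part of Proposition \ref{prop:invlim}; thus $\Phi$ is an isomorphism of cones and the claim follows.

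I expect the main obstacle to be exactly the localization identification $L^0_{\mm\restr\Omega}(T^*\X)\cong L^0(T^*\X)\restr\Omega$ for $\Omega$ open; granting it, the rest is a routine manipulation of inverse limits together with the gluing property enjoyed by every $L^0(\mm)$-normed module. A minor point to dispatch along the way is that every compatible family automatically satisfies the $\esssup$-integrability condition in Proposition \ref{prop:invlim}: on any overlap $\Omega\cap\Omega'$ compatibility forces $|\omega^\Omega|=|\omega^{\Omega'}|$, and since $\F(u)$ covers $\X$ the relevant essential supremum equals a single $\mm$-a.e.\ finite function.
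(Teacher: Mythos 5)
Your overall architecture is the same as the paper's: the whole content of the lemma is the localization of the cotangent module to open sets (equivalently, a norm-preserving section of the projections coming from Lemma \ref{le:changemeas}), after which the conclusion is a gluing argument over the cover $\F(u)$; your cone/universal-property bookkeeping, the isometry computation for injectivity, and the Lindel\"of-subcover-plus-gluing argument for surjectivity are fine and in fact more detailed than the paper's closing ``it is clear''. The issue is in how you justify the identification $L^0_{\mm\restr{\Omega}}(T^*\X)\cong \nchi_\Omega L^0(T^*\X)$, which is exactly the non-trivial step. The ingredients you invoke --- locality of minimal weak upper gradients for $f\in\Sob^2(\X)$, the fact that $\LIP_{bs}(\X)\subset\Sob^2(\X)$ generates $L^0(T^*\X)$, and Lemma \ref{le:changemeas} --- only produce an \emph{isometric embedding} of $\nchi_\Omega L^0(T^*\X)$ into ${\rm Ext}\big(L^0_{\mm\restr{\Omega}}(T^*\X)\big)$. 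They do not give surjectivity: the module $L^0_{\mm\restr{\Omega}}(T^*\X)$ is generated by differentials of functions in $\Sob^2(\X,\sfd_\X,\mm\restr{\Omega})$, a class strictly larger than $\Sob^2(\X,\sfd_\X,\mm)$ (a function may fail to be globally Sobolev while being Sobolev for the restricted measure), so knowing that $\LIP_{bs}(\X)$-differentials generate the \emph{ambient} cotangent module says nothing, as stated, about their density in the cotangent module of $(\X,\sfd_\X,\mm\restr{\Omega})$. Without surjectivity you cannot represent an arbitrary compatible family $(\omega^\Omega)$ as restrictions of elements of $L^0(T^*\X)$, and your gluing step does not start.

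This is precisely the point the paper fills with the second half of its citation of \cite[Theorem 4.19]{AmbrosioGigliSavare11-2}: if $f\in\Sob^2(\X,\sfd_\X,\mm\restr{\Omega'})$ has support at positive distance from $\X\setminus\Omega'$, then $f\in\Sob^2(\X,\sfd_\X,\mm\restr{\Omega})$ for $\Omega\supset\Omega'$; combined with a cutoff/exhaustion of $\Omega'$ and locality, this yields the norm-preserving right inverses $P_{\Omega'}^{\Omega}$ of the $Q_{\Omega}^{\Omega'}$ (equivalently, the surjectivity of your identification). To repair your argument you must either quote this extension statement and run the cutoff argument, or prove directly that differentials of $\LIP_{bs}(\X)$ functions generate $L^0_{\mm\restr{\Omega}}(T^*\X)$ itself --- a true but non-trivial claim that needs its own justification (approximation in energy does not immediately give module convergence of differentials). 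A minor related point: the isometry half (that restricting the measure to $\Omega$ does not decrease the minimal weak upper gradient on $\Omega$) is also not the elementary locality of the differential; it is the first half of the same theorem of Ambrosio--Gigli--Savar\'e, so your attribution should point there rather than to generic locality in \cite{Gigli14}.
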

\begin{proof}
The fact that $Q_{\Omega}^{\Omega'}\circ Q_{\X}^{\Omega}=Q_{\X}^{\Omega'}$ for $\Omega'\subset\Omega\in\F(u)$ is a direct consequence of the definition of the $Q$'s. 
For universality, we recall  \cite[Theorem 4.19]{AmbrosioGigliSavare11-2} and its proof (in particular: the assumption $\mm(\X)=1$ plays no role) to get  that $|Q_\Omega^{\Omega'}(\d_{\mm\restr\Omega}f)|=|\d_{\mm\restr{\Omega'}}f|$ $\mm$-a.e.\ on $\Omega'$ and that if $f\in \Sob^2(\X,\sfd_\X,\mm\restr{\Omega'})$ has support at positive distance from $\X\setminus \Omega'$, then $f\in \Sob^2(\X,\sfd_\X,\mm\restr\Omega)$ as well. 
It easily follows that $Q_\Omega^{\Omega'}:L^0_\Omega(T^*\X)\to L^0_{\Omega'}(T^*\X)$ has a unique norm-preserving right inverse, call it $P_{\Omega'}^\Omega$. Then if $\F(u)\ni \Omega\mapsto \omega^\Omega\in L^0_{\Omega}(T^*\X)$ satisfies $Q_\Omega^{\Omega'}(\omega^\Omega)=\omega^{\Omega'}$ for every  $\Omega'\subset\Omega\in\F(u)$, it is clear that there is a unique $\omega\in L^0(T^*\X)$ such that $\nchi_\Omega\,\omega=P_\Omega^\X(\omega^\Omega)$ for every $\Omega\in\F(u)$ and this is sufficient to conclude.
\end{proof}
Let  $\Omega\in\F(u)$ and define $S_\Omega:\{\d_{\mu_\Omega} f:f\in\Sob^2(\Y,\sfd_\Y,\mu_\Omega)\}\to  L^0_\Omega(T^*\X)$ by putting
\[
S_\Omega(\d_{\mu_\Omega} f):={\rm ext}( \d_{\mm\restr\Omega} g),
\]
where $g$ is related to $f$ as in Proposition \ref{prop:compo}, here applied to the space $(\X,\sfd_\X,\mm\restr\Omega)$. In particular the bound \eqref{eq:chain} gives
\begin{equation}
\label{eq:fortom}
|S_\Omega(\d_{\mu_\Omega} f)|\leq \nchi_\Omega\big(|\d_{\mu_\Omega}f|\circ u|D u|\big)
\end{equation}
which is easily seen to ensure that $S_\Omega$ is well posed (i.e.\ the value of $S_\Omega$ depends only on $\d_{\mu_\Omega} f$ and not on $f$). Thus by the universality property of the pullback we see that there exists a unique $L^0(\mm)$-linear and continuous map $T_\Omega:u^*L^0_{\Omega}(T^*\Y)\to L^0_\Omega(T^*\X)$ such that
\[
T_\Omega({\rm ext}([u^*\d_{\mu_\Omega}f]))=S_\Omega(\d_{\mu_\Omega}f)\qquad\forall f\in \Sob^2(\Y,\sfd_\Y,\mu_\Omega)
\]
and by \eqref{eq:fortom} such $T_\Omega$ satisfies
\begin{equation}
\label{eq:tonormt}
|T_\Omega(\omega)|\leq |Du||\omega|\quad\mm-a.e.\qquad\forall \omega\in u^*L^0_{\Omega}(T^*\Y).
\end{equation}
It is now only a matter of keeping track of the various definitions to check that  for every $\Omega'\subset\Omega\in\F(u)$ it holds
\begin{equation}
\label{eq:conj}
T_{\Omega'}(u^*P_\Omega^{\Omega'}(\omega))=Q_\Omega^{\Omega'}(T_\Omega(\omega))
\end{equation}
for every $\omega\in u^*L^0_{\Omega}(T^*\Y)$ of the form $\omega={\rm ext}([u^*\d_{\mm\restr\Omega}f])$ for some $f\in \Sob^2(\Y,\sfd_\Y,\mu_\Omega)$. Then by $L^0(\mm)$-linearity and continuity we see that \eqref{eq:conj} holds for every $\omega\in u^*L^0_{\Omega}(T^*\Y)$. In light of \eqref{eq:tonormt}, Proposition \ref{prop:limitmap}  and Lemma \ref{le:limitcot} we have that there is a unique $L^0(\mm)$-linear and continuous map $T: u^*L^0_u(T^*\Y)\to L^0(T^*\X)$ such that
\[
Q_\X^\Omega(T(\omega))=T_\Omega(P^\Omega(\omega))\qquad\forall \omega\in u^*L^0_u(T^*\Y),\ \Omega\in\F(u).
\]
We can now give the main definition of this section:
\begin{definition}\label{def:diffloc}
The differential $\d u: L^0(T\X)\to (u^*L^0_u(T^*\Y))^*$ is defined as the adjoint of $T$.
\end{definition}
Notice that by \eqref{eq:tonormt} it follows that $|T(\omega)|\leq |D u||\omega|$  for every $\omega\in u^*L^0_u(T^*\Y)$. Hence by duality we also get that $|\d u(v)|\leq |Du||v|$ $\mm$-a.e.\ for every $v\in L^0(T\X)$, i.e.\ $|\d u|\leq |Du|$ $\mm$-a.e.. Then arguing as in Proposition \ref{basic} we can prove that actually $|\d u|=|Du|$ $\mm$-a.e.. Analogously, natural variants of the properties stated in Sections \ref{se:r}, \ref{se:bd}, \ref{se:kir} hold for this more general notion of differential. We omit the details.

We conclude observing that if $u\in \Sob^2(\X,\Y)\subset \Sob^2_{\rm loc}(\X,\Y)$, then $\X\in\F(u)$, i.e.\ the directed family $\F(u)$ has a maximum. It is then clear that the differential $\d u$ in the sense of Definition \ref{def:diffloc} canonically coincides with the one given by Definition \ref{def:differential_Sobolev_maps}.

\def\cprime{$'$} \def\cprime{$'$}

\end{document}